\newtheorem{thm}{Theorem}[section]
\newtheorem{coro}{Corollary}[section]
\newtheorem{lem}{Lemma}[section]
\newtheorem{mthd}{Method}[section]
\theoremstyle{definition}
\newtheorem{defn}{Definition}[section]
\theoremstyle{remark}
\begin{document}
	
	\title{{New Relaxation  Modulus Based Iterative Method for  Large and Sparse Implicit Complementarity  Problem}}
	
	\author{Bharat Kumar$^{a,1}$, Deepmala$^{a,2}$ and A.K. Das $^{b,3}$\\
		\emph{\small $^{a}$Mathematics Discipline, PDPM-Indian Institute of Information Technology,}\\
		\emph{\small  Design and Manufacturing, Jabalpur - 482005 (MP), India}\\
		\emph{\small $^{b}$Indian Statistical Institute, 203 B.T. Road, Kolkata - 700108, India }\\
		%\emph{\small $^{b}$}\\
		\emph{\small $^1$Email:bharatnishad.kanpu@gmail.com , $^2$Email: dmrai23@gmail.com}\\
		\emph{\small $^3$Email: akdas@isical.ac.in}}
	\date{}
	\maketitle

\abstract
\noindent {This article presents a class of  new relaxation modulus-based iterative methods to process the large and sparse implicit complementarity problem (ICP).  Using two positive diagonal matrices, we formulate a fixed-point equation and prove that it is equivalent to  ICP. Also, we provide sufficient  convergence conditions for the proposed methods when the system matrix is a $P$-matrix or an $H_+$-matrix.}\\
\noindent	\textbf{Keyword} {Implicit complementarity problem, $H_{+}$-matrix, $P$-matrix, matrix splitting, convergence.}	\\
\noindent	\textbf{MSC Classification} {90C33, 65F10, 65F50.}
	%%\pacs[JEL Classification]{D8, H51}
	%%\pacs[MSC Classification]{35A01, 65L10, 65L12, 65L20, 65L70}
	\maketitle
	\section{Introduction}\label{sec1}
	The implicit complementarity problem (ICP) was introduced by Bensoussan et al. in 1973. For details, see \cite{7} and  \cite{8}.  The linear complementarity problem (LCP) is a special case of the ICP.  The ICP is used in many fields, such as engineering and economics, scientific computing, stochastic optimal control problems and convex cones. The ICP is discussed well in the literature. For s, see \cite{2}, \cite{3}, \cite{4}, \cite{5} and  \cite{6}.\\
	The large and sparse matrices are matrices that have a large number of rows and columns but a small number of non-zero elements. In other words, they are matrices where the majority of the elements are zero. Sparse matrices are commonly used to represent complex systems or large datasets in fields such as computer science, mathematics, physics, and engineering. The sparsity of the matrix means that it is not practical to store each element individually, and specialized data structures and algorithms must be used to efficiently store and manipulate the matrix.\\
	Given matrix $A \in \mathbb{R}^{n\times n} $ is a large and sparse and  vector $q \in \mathbb{R}^n$, the implicit complementarity problem denoted as ICP$(q, A, \zeta)$ is to find the solution vector $ z \in \mathbb{R}^n $ to the following system:
	\begin{equation}\label{eq1}
		Az+q\geq 0, ~~~z-\zeta(z)\geq 0, ~~~ (z-\zeta(z))^T(Az+q)=0
	\end{equation}
	where $\zeta(z)$ is a mapping from $\mathbb{R}^n$ into $\mathbb{R}^n$. When we put $\zeta(z)=0$ in Equation (\ref{eq1}), the ICP$(q, A, \zeta)$ reduces to the linear complementarity problem (LCP). For more s on LCP and its applications, see \cite{9}, \cite{10}, \cite{35}, \cite{36}, \cite{37}, \cite{38}, \cite{39}, \cite{44}, \cite{45}, \cite{41}, \cite{42}, \cite{46}, \cite{47}, \cite{neogy2011singular}, \cite{dutta2022column}, \cite{neogy2006some} and \cite{43}.\\
	In recent decades to solve complementarity problems, many methods have been proposed by considering some equivalent problems \cite{26}, \cite{27} and \cite{28}, such as fixed point approaches \cite{11}, projection-type methods \cite{12}, smooth and nonsmooth Newton methods \cite{13} and \cite{14},  matrix multisplitting methods \cite{17}, \cite{18}, \cite{19}, \cite{20}, \cite{21},\cite{34} and inexact alternating direction methods \cite{22} and  \cite{23}   and implicit complementarity problems \cite{52} and \cite{49}.\\
	In 2016, Hong and Li \cite{50}  demonstrated numerically the superiority of the modulus-based matrix splitting (MMS) method over the projection fixed-point method and the Newton method when solving the ICP$(q, A, \zeta)$.   Zheng and Vong  \cite{51} proposed a modified modulus based iteration method and subsequently established the convergence analysis in 2019. Furthermore,
	Li et.al. \cite{48} presented a  class of modified relaxation two sweep   modulus-based matrix splitting  iteration methods to solve the ICP$(q, A, \zeta)$ in 2022. 
Motivated by Li et al., for solving large and sparse ICP$(q, A, \zeta)$, we propose a class of new relaxation modulus-based iterative methods to reduce the number of iterations and CPU time and accelerate the convergence performance. \\
	This article is presented as follows: Some useful notations, definitions  and lemmas are given in Section 2 which are required for the remaining sections of this work. In Section 3, we  introduce  a family of   new relaxation modulus-based iterative methods  constructed using the  equivalent fixed-point form of the large and sparse ICP$(q, A, \zeta)$. In Section 4, we establish some convergence criteria for the proposed methods.  Section 5 contains the conclusion of the article.
	%%%%%%%%%%%%%%%%%%%%%%%%%%%%%%%%%%%%%%%%%%%%%%%%%%%%%%%%%%%%%%%%%%%%%%%%%%%%%%%%
	\section{Preliminaries}\label{Preli}
	In this section,  we introduce  some basic notations, definitions and lemmas that will be used throughout the article to examine the convergence analysis of the proposed methods and some existing iterative methods for solving implicit complementarity problems.\\
	The following is a list of related notations that are used for a given large and sparse  matrix $A$:
	\begin{itemize}
		\item Let  $A=({a}_{ij})\in \mathbb{R}^{n\times n}$ and  $B=({b}_{ij})\in \mathbb{R}^{n\times n}$.  We use $A \geq $ $(\textgreater)$ $ B$ to   denotes $a_{ij}\geq (\textgreater)$ $ b_{ij}$ $\forall$ $1 \leq  i,j \leq n$;
		\item     $(\star)^{T}$  denotes the transpose of the given  matrix or  vector;
		\item We use  $A=0 \in \mathbb{R}^{n \times n}$  to denotes  $ a_{ij}=0 ~\forall~ i,j$;
		\item $\lvert A\rvert=(\lvert {a}_{ij}\rvert)$  $\forall ~i,j$ and $A^{-1}$ represents the inverse of the matrix $A$;
		\item $\Omega_{1}$, $\Omega_{2}$ are real positive diagonal matrices of order $n$ and $\phi \in \mathbb{R}^{n \times n}$ is a relaxation  diagonal matrix.
		\item Let $x, y \in \mathbb{R}^n,$  min$(x,y)=$ the vector whose $i^{th}$ component is min$(x_{i}, y_{i})$ and $\|x\|_{2}=\sqrt{\sum_{i=1}^{n}}x^2_{i}$;\\
		\item Assume $A=D-L-U $ where $ D = diag(A)$ and  $L$, $U$ are the strictly lower, upper triangular 
		matrices of $A$, respectively;
		\item Let $A=M-N$ be a  splitting. We use $M\Omega_{1}=M_{\Omega_{1}}$, $N\Omega_{1}=N_{\Omega_{1}}$, $A\Omega_{1}=A_{\Omega_{1}}$, $B\Omega_{1}=B_{\Omega_{1}}$, $D\Omega_{1}=D_{\Omega_{1}}$  and $\phi\Omega_{1}=\phi_{\Omega_{1}}$.
	\end{itemize} 
	\begin{defn}\cite{31}
		Suppose  $ A=(a_{ij})\in  \mathbb{R}^{n\times n}$. Then its  comparison matrix $\langle A \rangle =(\langle a_{ij} \rangle)$  is defined by $\langle a_{ij} \rangle$ = $\lvert {a}_{ij}\rvert$ \text{if $i=j$} and $-\rvert {a}_{ij}\rvert$ \text{if $i\neq j$} for $i,j = 1,2,\ldots,n$.
	\end{defn}
	%%%%%%%%%%%%%%%%%%%%%%%%%%%%%%%%%%%%%%%%%%%%
	\begin{defn}\label{def1}\cite{32}
		Suppose  $ A \in \mathbb{R}^{n\times n}$. Then $A$ is said to be  a $Z$-matrix if all of its off-diagonal elements are nonpositive; an $M$-matrix if $A^{-1}\geq 0$ as well as   $Z$-matrix; an $H$-matrix; if   $\langle A \rangle$ is an $M$-matrix and an $H_+$-matrix if it is an $H$-matrix as well as  ${a}_{ii} ~\textgreater ~0$ for $~i =1,2,\ldots,n$.
	\end{defn} 
	\begin{defn}\cite{35}
		Let  $A\in \mathbb{R}^{n\times n}$. Then $A$ is said to be a $P$-matrix if all of its principle minors are positive i.e. det$({A}_{\alpha \alpha})~\textgreater ~0$  for all  $\alpha \subseteq \{1,2,\ldots, n\}$.
	\end{defn}
	%%%%%%%%%%%%%%%%%%%%%%%%%%%%%%%%%%%%%%%%%
	\begin{defn}\label{def2}\cite{32}
		The splitting $A = M-N $ is called an  $M$-splitting if $M$ is a nonsingular $M$-matrix and $N \geq 0$; an $H$-splitting if $\langle M \rangle -\lvert N \rvert $ is an $M$-matrix.
	\end{defn} 
	\begin{defn}{\cite{54}}
		Let $A=(a_{ij})\in \mathbb{R}^{n \times n}$. Then $A $ is said to be a strictly diagonally dominant (sdd) matrix  if $\lvert a_{ii}\rvert\textgreater\sum_{j\neq i}\lvert a_{ij}\rvert$ for $i,j=1,2,\ldots,n.$
	\end{defn}
	\begin{lem}\label{lem1}\cite{32}
		Suppose  $A, {B} \in \mathbb{R}^{n\times n}$.
		\begin{enumerate}
			\item  If $A$ is an $M$-matrix, ${B}$ is a $Z$-matrix and $A \leq {B}$, then ${B}$ is an $M$-matrix. 
			\item If $A$ is an $H$-matrix, then $\lvert A^{-1}\rvert~\leq ~\langle A\rangle^{-1}$.
			\item  If $A \leq {B}$ then $\rho(A) ~\leq~  \rho({B})$  where $\rho(\star)$ represents the spectral radius of the matrix.
		\end{enumerate}
	\end{lem}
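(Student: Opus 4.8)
\medskip
\noindent\textbf{Proof plan.} This statement bundles three classical facts from the theory of nonnegative and $M$-matrices, and the plan is to establish them in the order 3, 1, 2, since the spectral-radius comparison in part~3 is the engine behind the other two. Throughout, ``$M$-matrix'' is understood in the nonsingular sense of Definition~\ref{def1} (so in particular $A^{-1}\ge 0$), and in part~3 I read the hypothesis with the nonnegativity that makes it correct, i.e.\ $0\le A\le B$ as in the cited source (the companion fact $|A|\le B\Rightarrow\rho(A)\le\rho(B)$ follows by the same argument). For part~3 itself I would invoke Gelfand's formula $\rho(C)=\lim_{k\to\infty}\|C^{k}\|^{1/k}$ together with a norm that is monotone on the nonnegative cone, e.g.\ the maximum row-sum norm $\|C\|=\max_{i}\sum_{j}|c_{ij}|$. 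From $0\le A\le B$ an easy induction gives $0\le A^{k}\le B^{k}$ for every $k$, hence $\|A^{k}\|\le\|B^{k}\|$; taking $k$-th roots and letting $k\to\infty$ gives $\rho(A)\le\rho(B)$.

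For part~1, since $A$ and $B$ are $Z$-matrices I would write $A=sI-P$ and $B=sI-Q$ with $s>\max_{i}\{a_{ii},b_{ii}\}$, so that $P=sI-A\ge 0$ and $Q=sI-B\ge 0$; then $A\le B$ is equivalent to $Q\le P$. The standard characterization of nonsingular $M$-matrices gives $s>\rho(P)$, and part~3 applied to $0\le Q\le P$ yields $\rho(Q)\le\rho(P)<s$, so $B=sI-Q$ is again a nonsingular $M$-matrix. (A shorter route that avoids the $sI-P$ representation: set $u=A^{-1}e$ with $e=(1,\dots,1)^{T}$; then $u>0$ because the nonnegative nonsingular matrix $A^{-1}$ has no zero row, and $Au=e$, so $Bu\ge Au=e>0$ since $B\ge A$ and $u>0$; a $Z$-matrix admitting a strictly positive super-solution is a nonsingular $M$-matrix.)

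For part~2, I would split $A=D-C$ with $D=\mathrm{diag}(A)$ and $C$ the off-diagonal remainder, so that $\langle A\rangle=|D|-|C|=|D|(I-J)$, where $J:=|D|^{-1}|C|\ge 0$. Since $\langle A\rangle$ is an $M$-matrix its diagonal $|D|$ is positive, hence $D$ is invertible, $A=D(I-D^{-1}C)$, and $(I-J)^{-1}=\langle A\rangle^{-1}|D|\ge 0$; thus the $Z$-matrix $I-J$ is a nonsingular $M$-matrix, which forces $\rho(J)<1$. As $|D^{-1}C|=|D|^{-1}|C|=J$, part~3 (in the $|\cdot|$ form) gives $\rho(D^{-1}C)\le\rho(J)<1$, so the Neumann series converges and $A^{-1}=\sum_{k\ge 0}(D^{-1}C)^{k}D^{-1}$. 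Taking absolute values termwise and using $|XY|\le|X||Y|$ repeatedly, $|A^{-1}|\le\sum_{k\ge 0}|D^{-1}C|^{k}|D^{-1}|=\sum_{k\ge 0}J^{k}|D|^{-1}=(I-J)^{-1}|D|^{-1}=\langle A\rangle^{-1}$, which is the claim.

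The algebraic identities $\langle A\rangle=|D|-|C|$ and $(I-J)^{-1}|D|^{-1}=\langle A\rangle^{-1}$, along with the manipulations of the $sI-P$ form, are routine. The step I expect to need the most care is the one in part~2 that licenses expanding the Neumann series and bounding it termwise by a convergent nonnegative series --- that is, deducing $\rho(D^{-1}C)<1$ from the hypothesis that $\langle A\rangle$ is an $M$-matrix --- and this is exactly where the spectral-radius comparison of part~3 is used; the rest is bookkeeping with the equivalent characterizations of $M$-matrices supplied by the cited literature.
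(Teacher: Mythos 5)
The paper does not prove this lemma at all: it is quoted verbatim from the cited reference \cite{32} and used as a black box, so there is no in-paper argument to compare yours against. Your proposal is a correct, self-contained reconstruction of the three classical facts, and the order 3 $\to$ 1 $\to$ 2 is the natural one since the Perron--Frobenius comparison drives the other two parts. Two remarks. First, you were right to flag that part~3 as literally printed is false without a nonnegativity hypothesis (take $A=-I$, $B=0$); the intended statement is $0\le A\le B\Rightarrow\rho(A)\le\rho(B)$, or equivalently $|A|\le B\Rightarrow\rho(A)\le\rho(B)$, and your monotone-norm/Gelfand argument proves exactly that. This is not a gap in your proof but a typo in the source statement, and it matters downstream: the paper later applies the lemma only to nonnegative or comparison-type matrices, so the corrected form is what is actually used. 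Second, in part~2 the only step that deserves the extra sentence you gave it is indeed the deduction $\rho(D^{-1}C)<1$ from $\langle A\rangle=|D|(I-J)$ being a nonsingular $M$-matrix, which legitimises the termwise bound $|A^{-1}|\le\sum_{k\ge0}J^{k}|D|^{-1}=\langle A\rangle^{-1}$; everything else is the routine bookkeeping you describe. No gaps.
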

	%%%%%%%%%%%%%%%%%%%%%%%%%%%%%%%%%%%%%%%%%%%
	\begin{lem}\label{lem4} \cite{32}
		Suppose  $0 \leq A \in \mathbb{R}^{n \times n}  $. If there exist $0 <u  \in \mathbb{R}^{n}$ and a scalar $\alpha ~\textgreater ~ 0$  such that $Au \leq  \alpha u$ then $\rho(A) \leq  \alpha $. Moreover, if~ $Au ~\textless~ \alpha u$ then $\rho(A) \textless \alpha$. 
	\end{lem}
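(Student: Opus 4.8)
The plan is to deduce both assertions from the Perron--Frobenius theory of nonnegative matrices. The one external fact I would invoke is that, since $A \geq 0$, the spectral radius $\rho(A)$ is an eigenvalue of $A^{T}$ and admits a nonnegative eigenvector: there is $v \in \mathbb{R}^{n}$ with $v \geq 0$, $v \neq 0$, and $v^{T}A = \rho(A)\,v^{T}$. Everything else is elementary manipulation of the hypothesis $Au \leq \alpha u$, so the argument splits naturally into the non-strict case and a short reduction for the strict case.

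For the first claim I would left-multiply $Au \leq \alpha u$ by $v^{T} \geq 0$; since $v \geq 0$ this preserves the inequality, giving $v^{T}Au \leq \alpha\,v^{T}u$, and substituting $v^{T}A = \rho(A)\,v^{T}$ turns this into $\rho(A)\,(v^{T}u) \leq \alpha\,(v^{T}u)$. The key observation is that $v^{T}u = \sum_{i=1}^{n} v_{i}u_{i} > 0$: we have $u > 0$ by hypothesis, $v \geq 0$, and $v \neq 0$, so at least one term is strictly positive and none is negative. Dividing by the positive scalar $v^{T}u$ yields $\rho(A) \leq \alpha$.

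For the strict version I would reduce to the first part rather than repeat the computation. If $Au < \alpha u$, then $(Au)_{i} < \alpha u_{i}$ for every $i$, and since $u_{i} > 0$ the number $\beta := \max_{1 \leq i \leq n} (Au)_{i}/u_{i}$ is well defined with $\beta < \alpha$, being a maximum of finitely many reals each strictly below $\alpha$; moreover $Au \leq \beta u$ by construction. If $\beta \leq 0$ then $Au = 0$ (it is $\geq 0$ because $A \geq 0$ and $u > 0$, and $\leq 0$ because $\beta u \leq 0$), which forces $A = 0$ and hence $\rho(A) = 0 < \alpha$; otherwise $\beta > 0$ and the already-proved first part, applied with $\alpha$ replaced by $\beta$, gives $\rho(A) \leq \beta < \alpha$.

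I do not expect a real obstacle here; the only point needing care is the strict inequality $v^{T}u > 0$, which is precisely where the strict positivity of $u$ (not merely $u \geq 0$) enters and without which the statement would fail. As a Perron--Frobenius-free alternative I would iterate the hypothesis to get $A^{k}u \leq \alpha^{k}u$ for all $k \geq 1$, extract the entrywise bound $(A^{k})_{ij} \leq \alpha^{k}\,u_{i}/u_{j}$ from $A \geq 0$ and $u > 0$, and let $k \to \infty$ in Gelfand's formula $\rho(A) = \lim_{k \to \infty}\|A^{k}\|^{1/k}$; I would keep this in reserve but present the eigenvector argument as the main proof.
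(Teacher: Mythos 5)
Your proof is correct. The paper does not prove this lemma at all --- it is quoted verbatim from the cited reference [32] as a known fact --- so there is no in-paper argument to compare against. Your Perron--Frobenius argument (left eigenvector $v^{T}A=\rho(A)v^{T}$ with $v\geq 0$, $v\neq 0$, paired against $u>0$ so that $v^{T}u>0$, then the reduction of the strict case to the non-strict case via $\beta=\max_i (Au)_i/u_i$) is the standard textbook proof and is complete, including the correct handling of the degenerate case $\beta\leq 0$.
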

	%%%%%%%%%%%%%%%%%%%%%%%%%%%%%%%%%%%%
	\begin{lem} \cite{54}\label{2.4}
		Let $A\in \mathbb{R}^{n \times n}$ be a sdd matrix. Then $$\|A^{-1}E\|_\infty \leq \max_{1\leq i \leq n}\frac{(\lvert E \rvert e)_{i}}{(\langle A \rangle e)_{i}},~~ \mbox{for all}~ E \in \mathbb{R}^{n \times n}$$ where $e=(1,1,\ldots,1)^T$.
	\end{lem}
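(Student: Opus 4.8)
\noindent\emph{Proof idea.} The plan is to push the entire estimate through the comparison matrix $\langle A\rangle$, exploiting that it is a nonsingular $M$-matrix with nonnegative inverse and that $\langle A\rangle e>0$ componentwise. First I would record the structural consequences of strict diagonal dominance: for each $i$ we have $(\langle A\rangle e)_i=|a_{ii}|-\sum_{j\neq i}|a_{ij}|>0$, so $\langle A\rangle$ is itself a strictly diagonally dominant $Z$-matrix with positive diagonal, hence (by the classical characterization of diagonally dominant $Z$-matrices) a nonsingular $M$-matrix. Consequently $A$ is an $H$-matrix and $\langle A\rangle^{-1}\ge 0$; in particular the quotients $(|E|e)_i/(\langle A\rangle e)_i$ in the statement are well defined because their denominators are strictly positive.

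Next I would reduce $\|A^{-1}E\|_\infty$ to a quantity controlled by $\langle A\rangle^{-1}$. Using the elementary identity $\|M\|_\infty=\max_{1\le i\le n}(|M|e)_i$, valid for every $M\in\mathbb{R}^{n\times n}$, together with the entrywise bounds $|A^{-1}E|\le |A^{-1}|\,|E|$ (triangle inequality applied to matrix products) and $|A^{-1}|\le\langle A\rangle^{-1}$ (this is Lemma \ref{lem1}(2), applicable since $A$ is an $H$-matrix), I get $|A^{-1}E|\le\langle A\rangle^{-1}|E|$. Multiplying on the right by $e\ge 0$ and taking the largest component then gives
$$\|A^{-1}E\|_\infty=\max_{1\le i\le n}(|A^{-1}E|e)_i\le\max_{1\le i\le n}(\langle A\rangle^{-1}|E|e)_i=\big\|\langle A\rangle^{-1}|E|e\big\|_\infty .$$

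Finally I would bound $\langle A\rangle^{-1}|E|e$ by a scalar multiple of $e$. Put $\beta=\max_{1\le i\le n}\dfrac{(|E|e)_i}{(\langle A\rangle e)_i}$. By the choice of $\beta$ we have $|E|e\le\beta\,\langle A\rangle e$ componentwise; applying the nonnegative matrix $\langle A\rangle^{-1}$ to both sides preserves the inequality, so $\langle A\rangle^{-1}|E|e\le\beta\,\langle A\rangle^{-1}\langle A\rangle e=\beta e$, whence $\big\|\langle A\rangle^{-1}|E|e\big\|_\infty\le\beta$. Chaining this with the previous display yields $\|A^{-1}E\|_\infty\le\beta$, which is exactly the claimed inequality.

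The argument is essentially routine; the only points that require a moment's care are the justification that $\langle A\rangle^{-1}$ exists and is nonnegative (so that every order-preserving step is legitimate) and that $\langle A\rangle e$ has strictly positive entries (so that $\beta$ and the stated quotients make sense). Both follow at once from strict diagonal dominance, so I do not anticipate a genuine obstacle.
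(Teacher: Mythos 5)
Your argument is correct and complete: the chain $|A^{-1}E|\le|A^{-1}|\,|E|\le\langle A\rangle^{-1}|E|$, followed by the observation that $|E|e\le\beta\,\langle A\rangle e$ with $\beta=\max_i (|E|e)_i/(\langle A\rangle e)_i$ and that $\langle A\rangle^{-1}\ge 0$ preserves this inequality, is exactly the standard proof of this bound. The paper itself states the lemma without proof, importing it from the cited reference, so there is nothing to compare against; your reconstruction matches the usual argument found in that literature.
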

	\begin{lem}\cite{53}
		Let $A \in \mathbb{R}^{n \times n}$ be  a nonsingular $M$-matrix then there exists a positive diagonal matrix $V$ such that $AV$ is an sdd matrix.
	\end{lem}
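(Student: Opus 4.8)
The plan is to exploit the standard characterization of nonsingular $M$-matrices by a positive diagonal scaling. First I would produce a strictly positive vector $v$ with $Av>0$. Since $A$ is a nonsingular $M$-matrix, $A^{-1}\geq 0$; being nonsingular, $A^{-1}$ has no zero row. Put $e=(1,1,\ldots,1)^T$ and define $v:=A^{-1}e$. Then each component $v_i$ is the sum of the entries of the $i$-th row of $A^{-1}$, all nonnegative and not all zero, so $v>0$; moreover $Av=AA^{-1}e=e>0$ by construction. (Equivalently, one may invoke directly the known fact that a $Z$-matrix $A$ is a nonsingular $M$-matrix iff there is some $v>0$ with $Av>0$.)

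Next I would set $V=\mathrm{diag}(v_1,\ldots,v_n)$, which is a positive diagonal matrix, and look at $AV$ entrywise: $(AV)_{ij}=a_{ij}v_j$. Because $A$ is in particular a $Z$-matrix, $a_{ij}\leq 0$ for $j\neq i$, and because $A$ is a nonsingular $M$-matrix its diagonal entries satisfy $a_{ii}>0$. Combined with $v_j>0$ this gives $|(AV)_{ii}|=a_{ii}v_i$ and $|(AV)_{ij}|=-a_{ij}v_j$ for $j\neq i$.

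Then for each row $i$ I would simply compute
\[
|(AV)_{ii}|-\sum_{j\neq i}|(AV)_{ij}| = a_{ii}v_i+\sum_{j\neq i}a_{ij}v_j = (Av)_i = e_i = 1 > 0,
\]
which is exactly the strict diagonal dominance inequality for the matrix $AV$. Hence $AV$ is an sdd matrix, completing the proof.

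I do not expect a real obstacle here; the only step needing care is the strict positivity of $v$ (not merely $v\geq 0$), which is precisely where nonsingularity of $A$ — i.e. the absence of a zero row in $A^{-1}$ — is used. Everything else is a one-line row-sum computation using only the $Z$-matrix sign pattern and $a_{ii}>0$.
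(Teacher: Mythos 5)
Your proof is correct. Note that the paper itself states this lemma without proof, simply citing the reference, so there is no in-paper argument to compare against; your construction is the standard one. The two points that need care are both handled properly: strict positivity of $v=A^{-1}e$ follows because $A^{-1}\geq 0$ has no zero row, and the identity $|(AV)_{ii}|-\sum_{j\neq i}|(AV)_{ij}|=(Av)_i=1>0$ uses exactly the $Z$-matrix sign pattern together with $a_{ii}>0$ (which, if you wanted to avoid asserting it, also falls out of the same row computation, since $a_{ii}v_i=1-\sum_{j\neq i}a_{ij}v_j>0$).
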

	%%%%%%%%%%%%%%%%%%%%%%%%%%%%%%%%%%%%%%%%%%%%%%%%%%%%%%%%%%%%%%%%%%%%
	Now, we introduce the modulus based matrix splitting iteration method for ICPs established by Hong and Li in \cite{50}. Define the modulus transformation 
	$$z-\zeta(z)=g(z)=\frac{1}{\gamma}(\lvert x \rvert +x)$$ and $$\omega=Az+q= \frac{\Omega}{\gamma}(\lvert x \rvert -x)$$
	where $\Omega$ is a positive diagonal matrix, $\gamma$ is a positive constant. If $\zeta(z)$ is invertible and $A=M-N$ is a splitting of the matrix $A$, ICP$(q, A, \zeta)$ can be transformed to an implicit fixed-point equation equivalently, which is described as follows.
	\begin{lem}\cite{48}
		For the $ICP(q, A, \zeta)$ , the following statements hold true:
	\noindent 	(1). If $z$ is the solution for the ICP(q,A, $\zeta$)
		, $x=\frac{\gamma}{2}(z-\Omega^{-1}\omega)-\zeta(z)$ satisfies:
		\begin{equation}\label{eq1'}
			(\Omega +Mx)=Nx+(\Omega-A)\lvert x \rvert -\gamma q- \gamma A \zeta[g^{-1}(\frac{1}{\gamma}(\lvert x \rvert +x))]
		\end{equation}
	\noindent 	(2) If $x$ satisfies Equation (\ref{eq1'}), then 	$z-\zeta(z)=g(z)=\frac{1}{\gamma}(\lvert x \rvert +x)$ and $\omega=Az+q= \frac{\Omega}{\gamma}(\lvert x \rvert -x)$, where $z$ is the solution for the ICP$(q, A, \zeta)$.
	\end{lem}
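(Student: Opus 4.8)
The plan is to prove this as the standard two–way verification underlying modulus–based reformulations, organized around one elementary observation: for \emph{any} vector $x$, the vectors $\tfrac{1}{\gamma}(\lvert x\rvert + x)$ and $\tfrac{\Omega}{\gamma}(\lvert x\rvert - x)$ are automatically componentwise nonnegative and mutually complementary, because $(\lvert x_i\rvert + x_i)(\lvert x_i\rvert - x_i) = \lvert x_i\rvert^2 - x_i^2 = 0$ for every $i$. Everything else is reversible linear algebra together with the splitting $A = M - N$ and the invertibility of the map $g(\cdot) = (\cdot) - \zeta(\cdot)$.

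For part (1), I would start from a solution $z$ of ICP$(q,A,\zeta)$, set $\omega := Az + q \ge 0$, and recall that $z - \zeta(z) = g(z) \ge 0$ with $g(z)^T\omega = 0$; componentwise this forces, for each $i$, either $g(z)_i = 0$ or $\omega_i = 0$. Substituting the definition of $x$ into this case distinction (separating the indices with $x_i \ge 0$ from those with $x_i < 0$) I would verify the two modulus identities $g(z) = \tfrac{1}{\gamma}(\lvert x\rvert + x)$ and $\omega = \tfrac{\Omega}{\gamma}(\lvert x\rvert - x)$. Since $g$ is invertible, $z = g^{-1}\!\big(\tfrac{1}{\gamma}(\lvert x\rvert + x)\big)$, hence $\zeta(z) = \zeta\!\big[g^{-1}(\tfrac{1}{\gamma}(\lvert x\rvert + x))\big]$. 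Then I would write $\gamma z = \gamma(z-\zeta(z)) + \gamma\zeta(z) = (\lvert x\rvert + x) + \gamma\zeta(z)$, plug this into $\gamma\omega = \gamma Az + \gamma q$, use $\gamma\omega = \Omega(\lvert x\rvert - x)$, and collect terms to reach $(\Omega + A)x = (\Omega - A)\lvert x\rvert - \gamma q - \gamma A\zeta(z)$. Finally, splitting $\Omega + A = (\Omega + M) - N$ turns this into exactly \eqref{eq1'} (reading $(\Omega+Mx)$ as $(\Omega+M)x$).

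For part (2), I would run the same computation backwards. Given $x$ satisfying \eqref{eq1'}, define $z := g^{-1}\!\big(\tfrac{1}{\gamma}(\lvert x\rvert + x)\big)$, so that $z - \zeta(z) = \tfrac{1}{\gamma}(\lvert x\rvert + x)$, and $\omega := \tfrac{\Omega}{\gamma}(\lvert x\rvert - x)$. Nonnegativity of $z-\zeta(z)$ and of $\omega$ is immediate since $\lvert x\rvert + x \ge 0$ and $\lvert x\rvert - x \ge 0$ componentwise and $\Omega$ is a positive diagonal matrix, while $(z-\zeta(z))^T\omega = 0$ follows from the componentwise identity noted at the outset. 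It then remains to check $\omega = Az+q$: rewriting \eqref{eq1'} via $A = M-N$ as $(\Omega + A)x = (\Omega - A)\lvert x\rvert - \gamma q - \gamma A\zeta(z)$, rearranging to $A(\lvert x\rvert + x) + \gamma A\zeta(z) + \gamma q = \Omega(\lvert x\rvert - x)$, and substituting $\lvert x\rvert + x = \gamma(z - \zeta(z))$ and $\Omega(\lvert x\rvert - x) = \gamma\omega$, the $A\zeta(z)$ terms cancel and we get $\gamma(Az + q) = \gamma\omega$, i.e. $\omega = Az + q$. Hence $z$ is a solution of ICP$(q,A,\zeta)$.

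The only genuinely delicate point is the role of the invertibility of $g$: it is what allows passing between $z$ and $x$ in both directions and, in \eqref{eq1'}, expressing $\zeta(z)$ through $g^{-1}$ of the modulus term, so that the fixed–point equation is even well defined. Apart from that, the step requiring the most care is the componentwise sign bookkeeping in part (1) that converts the orthogonality $g(z)^T\omega = 0$ into the two modulus identities; once those identities are available, both implications reduce to the same chain of equalities read in opposite directions.
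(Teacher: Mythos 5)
The paper does not prove this lemma at all: it is quoted verbatim from \cite{48} as a known result, with no proof environment following it, so there is no in-paper argument to compare yours against. Judged on its own, your proof is correct and is exactly the standard two-way modulus argument: the componentwise identity $(\lvert x_i\rvert+x_i)(\lvert x_i\rvert-x_i)=0$ gives nonnegativity and complementarity for free in the reverse direction, the case split on $g(z)_i=0$ versus $\omega_i=0$ recovers the two modulus identities in the forward direction, and the rest is the reversible chain $\Omega(\lvert x\rvert-x)=A(\lvert x\rvert+x)+\gamma A\zeta(z)+\gamma q$ rearranged into $(\Omega+A)x=(\Omega-A)\lvert x\rvert-\gamma A\zeta(z)-\gamma q$ and then split via $A=M-N$. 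Two small points worth making explicit if you write this up: first, the formula for $x$ as printed, $x=\frac{\gamma}{2}(z-\Omega^{-1}\omega)-\zeta(z)$, only yields the modulus identities if it is read as $x=\frac{\gamma}{2}\bigl(z-\zeta(z)-\Omega^{-1}\omega\bigr)$ (your case analysis silently uses this corrected form, just as you silently read $(\Omega+Mx)$ as $(\Omega+M)x$ in (\ref{eq1'}); both are typos inherited from the source); second, the invertibility hypothesis should be placed on $g=\mathrm{id}-\zeta$ rather than on $\zeta$ itself, which you correctly identify as the point that makes $\zeta\bigl[g^{-1}(\tfrac{1}{\gamma}(\lvert x\rvert+x))\bigr]$ well defined and the passage from $x$ back to $z$ legitimate.
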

\noindent                                                                                   	However, there are some drawbacks in the modulus-based matrix splitting iteration method. For instance, the stopping criterion for inner iterations is not clear, which may cost extra elapsed CPU time. Moreover, the initial vector is difficult for  us to choose so that $z^{(0)}\in \{z \lvert  g(z) \geq 0, \omega \geq  0 \} $ is satisfied. For this, Zheng and Vong proposed a modified framework and
	gave an effective stopping criterion for improvement in \cite{51}. The algorithm removes the restriction on the initial vector
	and the whole process does not start from the initial solution vector $z^{(0)}$
	but from the unconditional initial modulus vector $x^{0}$.
	\begin{mthd}\cite{51} The modified modulus-based matrix splitting iteration method, abbreviated as MMS method. For  computing  $z^{(k+1)}\in \mathbb{R}^{n}$  use the following steps.\\
	\noindent	\textbf{Step 0}: Initialize a vector $x^{(0)} \in \mathbb{R}^{n}$,  $\epsilon ~\textgreater ~ 0 $  and   set $ k=0 $.\\ 
	\noindent	\textbf{Step 1} Compute $z^{(k)}$ by 
		\begin{equation}\label{000}
			Az^{(k)}=\frac{1}{\gamma}\Omega(\lvert x^{(k)} \rvert -x^{(k)})-q
		\end{equation}
		\textbf{Step 2} If $ Res(z^{(k)})=\|min\{Az^{(k)}+q,z^{(k)}-\zeta(z^{(k)})\}\|_{2} \textless $ $\epsilon$,  then stop.\\
	\noindent	\textbf{Step 3}: Using the following scheme, generate  the sequence $x^{(k)}$:
		\begin{equation*}
			\begin{split}
				\begin{split}
					x^{(k+1)} &=	(\Omega_{2}+M_{\Omega_{1}}+\phi_{\Omega_{1}})^{-1}[(N_{\Omega_{1}}+\phi_{\Omega_{1}})x^{(k)}+(\Omega_{2}-A_{\Omega_{1}})\lvert x^{(k)} \rvert\\&-\gamma A \zeta(z^{(k)})-\gamma q] 
				\end{split}
			\end{split}
		\end{equation*}
\noindent	\textbf{Step 4}:  Set $k=k+1$ and return to step 1.\\ 
	If the coefficient matrix $A$ of the linear system (\ref{000}) is large sparse, the Krylov subspace method can be applied.\\
	 
\end{mthd} 
	\begin{mthd} \cite{48} Nan li et.al. introduced a   modified relaxation two-sweep to the new scheme to speed up the performance of iteration
		methods.    For  computing  $z^{(k+1)}\in \mathbb{R}^{n}$  use the following steps.    \\
	\noindent 	\textbf{Step 0}: Initialize a vector $x^{(0)} \in \mathbb{R}^{n}$,  $\epsilon ~\textgreater ~ 0 $  and   set $ k=0 $.\\ 
	\noindent 	\textbf{Step 1} Compute $z^{(k)}$:
		$$Az^{(k)}=\frac{1}{\gamma}\Omega(\lvert x^{(k)} \rvert -x^{(k)})-q$$
	\noindent 	\textbf{Step 2} If $ Res(z^{(k)})=\|min\{Az^{(k)}+q,z^{(k)}-\zeta(z^{(k)})\}\|_{2} \textless $ $\epsilon$,  then stop.\\
	\noindent 	\textbf{Step 3}: Using the following scheme, generate  the sequence $x^{(k)}$:
		\begin{equation*}
			\begin{split}
				\begin{split}
					x^{(k+1)} &=	(\Omega_{2}+M_{\Omega_{1}}+\phi_{\Omega_{1}})^{-1}[(N_{\Omega_{1}}+\phi_{\Omega_{1}})x^{(k)}+(\Omega_{2}-A_{\Omega_{1}})\lvert x^{(k)} \rvert\\&-\gamma A \zeta(z^{(k)})-\gamma q] 
				\end{split}
			\end{split}
		\end{equation*}
	\noindent 	\textbf{Step 4}:  Set $k=k+1$ and return to step 1. 
\end{mthd}
	\noindent In the following section, we introduce a family of new relaxation modulus based iterative methods. These methods help us reduce the number of iterations and the
	time required by the CPU, which improves convergence performance.
	\section{Main results}\label{sec2}
	In this section, we present a class of  new relaxation modulus-based iterative methods for solving large and sparse ICP$(q, A,\zeta)$. First, we construct a new equivalent expression of the ICP$(q, A,\zeta)$, the s are as follows.
	\begin{thm}\label{thm0}
		Let $A \in \mathbb{R}^{n \times n}$ and $q \in \mathbb{R}^{n}$. Suppose $\Omega_{1}$, $\Omega_{2} \in \mathbb{R}^{n \times n}$  are two positive diagonal matrices and $\phi \in R^{n}$ is relaxation diagonal matrix. Then the ICP$(q, A,\zeta)$ is equivalent to 
		\begin{equation} \label{eq2}
			(\Omega_2+M_{\Omega_{1}}+\phi_{\Omega_{1}})x=(N_{\Omega_{1}}+\phi_{\Omega_{1}})x+(\Omega_{2}-A_{\Omega_{1}})\lvert x\rvert-\gamma A\zeta(z)-\gamma q.
		\end{equation}
	\end{thm}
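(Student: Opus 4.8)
The plan is to extend the modulus transformation underlying the lemma of \cite{48} (recalled in Section~\ref{Preli}) to the present two-parameter, relaxed framework. Writing $A=M-N$, I introduce the change of variables
\[
z-\zeta(z)=g(z)=\frac{1}{\gamma}\,\Omega_{1}\bigl(\lvert x\rvert+x\bigr),\qquad \omega:=Az+q=\frac{1}{\gamma}\,\Omega_{2}\bigl(\lvert x\rvert-x\bigr).
\]
For every $x\in\mathbb{R}^{n}$ the vectors $\lvert x\rvert+x$ and $\lvert x\rvert-x$ are nonnegative and have complementary supports, so, since $\Omega_{1},\Omega_{2}$ are positive diagonal matrices, $g(z)\geq0$, $\omega\geq0$ and $g(z)^{T}\omega=0$ hold automatically; hence the only substantive content of the claimed equivalence is the linear identity $\omega=Az+q$, and I will show that, after the substitution, this identity is exactly \eqref{eq2} (paralleling the derivation of \eqref{eq1'}).

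For the forward implication, assume $z$ solves ICP$(q,A,\zeta)$, so $\omega=Az+q\geq0$, $g(z)=z-\zeta(z)\geq0$ and $\omega^{T}g(z)=0$, and define $x:=\frac{\gamma}{2}\bigl(\Omega_{1}^{-1}g(z)-\Omega_{2}^{-1}\omega\bigr)$. A componentwise case analysis (on whether $g(z)_{i}>0$, $\omega_{i}>0$, or both vanish) shows this $x$ satisfies $\frac{1}{\gamma}\Omega_{1}(\lvert x\rvert+x)=g(z)$ and $\frac{1}{\gamma}\Omega_{2}(\lvert x\rvert-x)=\omega$, so the transformation is consistent. Substituting $z=g(z)+\zeta(z)=\frac{1}{\gamma}\Omega_{1}(\lvert x\rvert+x)+\zeta(z)$ into $\omega=Az+q$ and clearing the factor $\frac{1}{\gamma}$ gives
\[
\Omega_{2}\bigl(\lvert x\rvert-x\bigr)=A\Omega_{1}\bigl(\lvert x\rvert+x\bigr)+\gamma A\zeta(z)+\gamma q .
\]
Collecting the $x$- and $\lvert x\rvert$-terms and using $A\Omega_{1}=M_{\Omega_{1}}-N_{\Omega_{1}}$ together with $A_{\Omega_{1}}=A\Omega_{1}$, this rearranges to $(\Omega_{2}+M_{\Omega_{1}})x=N_{\Omega_{1}}x+(\Omega_{2}-A_{\Omega_{1}})\lvert x\rvert-\gamma A\zeta(z)-\gamma q$, and adding $\phi_{\Omega_{1}}x$ to both sides yields \eqref{eq2}.

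For the converse, suppose $x$ satisfies \eqref{eq2}; recover $z$ from $z-\zeta(z)=\frac{1}{\gamma}\Omega_{1}(\lvert x\rvert+x)$ (which uses invertibility of the map $z\mapsto z-\zeta(z)$, as in the framework of \cite{48}) and put $\omega:=\frac{1}{\gamma}\Omega_{2}(\lvert x\rvert-x)$. As observed above, $g(z)\geq0$, $\omega\geq0$ and $g(z)^{T}\omega=0$ are immediate. Subtracting $\phi_{\Omega_{1}}x$ from both sides of \eqref{eq2} and reversing the algebra of the forward step gives $\frac{1}{\gamma}\Omega_{2}(\lvert x\rvert-x)=A\bigl(\frac{1}{\gamma}\Omega_{1}(\lvert x\rvert+x)+\zeta(z)\bigr)+q=A\bigl(g(z)+\zeta(z)\bigr)+q=Az+q$, i.e.\ $\omega=Az+q$. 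Hence $z$ solves ICP$(q,A,\zeta)$, which closes the equivalence. The only genuinely delicate point is the well-definedness of the correspondence $z\leftrightarrow x$: I must carry out carefully the componentwise verification that makes $\frac{1}{\gamma}\Omega_{1}(\lvert x\rvert+x)=g(z)$ and $\frac{1}{\gamma}\Omega_{2}(\lvert x\rvert-x)=\omega$ simultaneously consistent, and keep track of the standing invertibility assumption on $\zeta$ needed to recover $z$ from $x$; everything else is the linear rearrangement displayed above, and note that $\phi_{\Omega_{1}}$ enters only by being added to and subtracted from both sides, so it plays no role in the equivalence itself.
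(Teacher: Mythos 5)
Your proposal is correct and follows essentially the same route as the paper: the same modulus substitution $z-\zeta(z)=\frac{1}{\gamma}\Omega_{1}(\lvert x\rvert+x)$, $Az+q=\frac{1}{\gamma}\Omega_{2}(\lvert x\rvert-x)$, followed by the linear rearrangement and the splitting $A=(M+\phi)-(N+\phi)$. The only difference is that you spell out the nonnegativity and complementarity of $\lvert x\rvert\pm x$, the well-definedness of the correspondence $z\leftrightarrow x$, and the converse direction, all of which the paper's proof leaves implicit (it records only the forward substitution).
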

	\begin{proof}
		Suppose  $z=\zeta(z)+\frac{\Omega_{1}}{\gamma}(\lvert x \rvert+x)$ and $\phi= \frac{1}{\gamma}\Omega_{2}(\lvert x \rvert-x)$. From Equation (\ref{eq1}), we have
		$$\frac{1}{\gamma}\Omega_{2}(\lvert x \rvert-x)=A(\zeta(z)+\frac{\Omega_{1}}{\gamma}(\lvert x \rvert+x))+q$$
		This implies that 
		\begin{equation*}
			(\Omega_{2}+A\Omega_{1}) x  =(\Omega_{2}-A\Omega_{1})\lvert x \rvert-\gamma A \zeta(z)-\gamma q 
		\end{equation*}
		Let $A= (M+\phi)-(N+\phi)$ be a splitting, where $\phi$ is a relaxation  diagonal matrix. Then the above equation can be rewritten as   
		\begin{equation*}
			(\Omega_{2}+M\Omega_{1}+\phi\Omega_{1})x =(N\Omega_{1}+\phi\Omega_{1})x+(\Omega_{2}-A\Omega_{1})\lvert x \rvert-\gamma A \zeta(z)-\gamma q 
		\end{equation*}
		This implies that
		\begin{equation*}
			(\Omega_{2}+M_{\Omega_{1}}+\phi_{\Omega_{1}})x =(N_{\Omega_{1}}+\phi_{\Omega_{1}})x+(\Omega_{2}-A_{\Omega_{1}})\lvert x \rvert-\gamma A \zeta(z)-\gamma q 
		\end{equation*}
	\end{proof}
	\noindent According to the statement in the Theorem \ref{thm0}, we are able to find the solution to ICP$(q, A,\zeta)$ with the use of an implicit fixed-point Equation  (\ref{eq2}). In order to solve the large and sparse ICP$(q, A,\zeta)$ from  Equation  ($\ref{eq2}$) we establish a  new relaxation modulus-based iterative method which is known as Method 3.1.
	\begin{mthd}\label{mthd1}
		Let $A=(M+\phi)-(N+\phi)$ be a splitting of the matrix $A\in \mathbb{R}^{n\times n}$ and $q\in \mathbb{R}^{n}$. Suppose that $\Omega_{2}+M_{\Omega_{1}}+\phi_{\Omega_{1}}$ is a nonsingular matrix. Then we use the following equation for  Method  \ref{mthd1} given as 
		\begin{equation}\label{eq4}
			\begin{split}
				x^{(k+1)} &=	(\Omega_{2}+M_{\Omega_{1}}+\phi_{\Omega_{1}})^{-1}[(N_{\Omega_{1}}+\phi_{\Omega_{1}})x^{(k)}+(\Omega_{2}-A_{\Omega_{1}})\lvert x^{(k)} \rvert-\gamma A \zeta(z^{(k)})\\&-\gamma q] 
			\end{split}
		\end{equation}
		Let residual be the Euclidean norm of the error vector which is defined in \cite{48} given as follows:  $$ Res(z^{(k)})=\|min(z^{(k)}-\zeta(z^k), Az^{(k)}+q) \|_{2}.$$ 
		Consider an   initial vector $x^{(0)}\in \mathbb{R}^n$. For $k=0,1,2,\ldots,$ the iterative process continues until the iterative sequence $\{z^{(k)}\}_{k=0}^{+\infty} \subset \mathbb{R}^n$ converges. The iterative process stops if $Res(z^{(k)})$ $\textless $ $ \epsilon $. For  computing  $z^{(k+1)}\in \mathbb{R}^{n}$ we use the following steps.\\
		\noindent 	\textbf{Step 0}: Initialize a vector $x^{(0)} \in \mathbb{R}^{n}$,  $\epsilon~ \textgreater~  0 $  and   set $ k=0 $.\\ 
		\noindent 	\textbf{Step 1} Compute $z^{(k)}$:
			$$Az^{(k)}=\frac{1}{\gamma}\Omega(\lvert x^{(k)} \rvert -x^{(k)})-q$$
		\noindent 	\textbf{Step 2} If $ Res(z^{(k)})=\|min\{Az^{(k)}+q,z^{(k)}-\zeta(z^{(k)})\}\|_{2} \textless $ $\epsilon$,  then stop.\\
		\noindent 	\textbf{Step 3}: Using the following scheme, generate  the sequence $x^{(k)}$:
			\begin{equation*}
				\begin{split}
					\begin{split}
						x^{(k+1)} &=	(\Omega_{2}+M_{\Omega_{1}}+\phi_{\Omega_{1}})^{-1}[(N_{\Omega_{1}}+\phi_{\Omega_{1}})x^{(k)}+(\Omega_{2}-A_{\Omega_{1}})\lvert x^{(k)} \rvert\\&-\gamma A \zeta(z^{(k)})-\gamma q] 
					\end{split}
				\end{split}
			\end{equation*}
		\noindent 	\textbf{Step 4}:   Set $k=k+1$ and return to step 1. 
	\end{mthd}
	\noindent Moreover, Method \ref{mthd1}  provides a general framework to  solve the large and sparse  ICP$(q, A, \zeta)$.  We obtain  a class of new relaxation  modulus-based  iterative methods.
	\begin{enumerate}
		\item when $M=A,~ \Omega_1 = I, ~\Omega_2 =I,~ N=0 $, from Equation  ($\ref{eq4}$) we obtain 
		\begin{equation*}
			(I+M +\phi)x^{(k+1)}=\phi x^{(k)}+(I-A)\lvert x^{(k)}\rvert-\gamma q\zeta(z^{(k)})-\gamma q,	
		\end{equation*}
		known as  ``new relaxation modulus-based iterative (NRM) method$"$
		\item when $M=A, ~\Omega_1 =  I, ~\Omega_2 =\alpha I, ~N=0 $, from Equation  ($\ref{eq4}$) we obtain 
		\begin{equation*}
			(\alpha I+M +\phi)x^{(k+1)}=\phi x^{(k)}+(\alpha I-A)\lvert x^{(k)}\rvert-\gamma q\zeta(z^{(k)})-\gamma q,
		\end{equation*}
		known as  ``new relaxation Modified  modulus-based iterative (NRMM) method$"$
		\item when $M=\frac{1}{\alpha}(D-\beta L), ~ N=\frac{1}{\alpha}[(1-\alpha)D+(\alpha-\beta)L+\alpha U]$,  from Equation  ($\ref{eq4}$) we obtain 
		\begin{equation*}
			\begin{split}
				(\alpha \Omega_{2}+(D_{\Omega_{1}}-\beta L_{\Omega_{1}})+\alpha \phi_{\Omega_{1}})x^{(k+1)}&=((1-\alpha)D_{\Omega_{1}}+(\alpha-\beta)L_{\Omega_{1}}\\&+\alpha U_{\Omega_{1}}+\alpha \phi_{\Omega_{1}})x^{(k)}+\alpha(\lvert(\Omega_{2}\\&-A_{\Omega_{1}})\lvert x^{(k)}\rvert -\alpha \gamma A \zeta(z^{(k)})-\gamma \alpha q,
			\end{split}
		\end{equation*}
	\end{enumerate}
	known as  ``new relaxation modulus-based  accelerated over relaxation iterative (NRMAOR) method$"$. \\
	The NRMAOR method clearly converts into the following methods.
	\begin{enumerate}
		\item new relaxation modulus-based  successive overrelaxation iterative (NRMSOR) method when $(\alpha, \beta)$ takes the values $(\alpha, \beta)$.
		\item new relaxation modulus-based  Gauss-Seidel iterative (NRMGS) method when $(\alpha, \beta)$ takes the values $(1, 1)$.
		\item  iterative modulus-based Jacobi iterative (NRMJ) method when $(\alpha, \beta)$ takes the values $(1, 0)$.
	\end{enumerate}%%%%%%%%%%%%%%%%%%%%%%%%%%%%%%%%%%%%%%%%%%%%%%%%%%%%%%%%%%%%%%%
	\section{Convergence analysis}
	Some conditions for the unique solution of the  ICP$(q, A, \zeta)$ are presented in \cite{1} and \cite{2}. Without losing generality, we consider in this article that the ICP$(q, A, \zeta)$ has a unique solution.\\
	In this part of the article, we discuss the convergence analysis for the Method \ref{mthd1}. To begin, we discuss the convergence condition assuming that the system matrix $A$ is a $P$-matrix.
	%%%%%%%%%%%%%%%%%%%%%%%%%%%%%%%%%%%%%%%%%%%%%%%%%%%%%%%%%%%%%%%
	\begin{thm}\label{thm1}
		Let $A=(M+\phi)-(N+\phi)$ be a splitting of the $P$-matrix $A\in \mathbb{R}^{n\times n}$ and   $(\Omega_{2}+M_{\Omega_{1}}+\phi_{\Omega_{1}})$ be a  nonsingular matrix. If $\rho(L) \textless 1$ where $L=\lvert(\Omega_{2}+M_{\Omega_{1}}+\phi_{\Omega_{1}})^{-1}\rvert(\lvert N_{\Omega_{1}}+\phi_{\Omega_{1}}\rvert+\lvert \Omega_{2}-A_{\Omega_{1}}\rvert+2 \lvert A \rvert \Psi \lvert A^{-1} \rvert \Omega_{2} )$ and $\lvert \zeta(x)-\zeta(y)\rvert \leq \Psi \lvert x-y \rvert$ $\forall ~x,y \in \mathbb{R}^{n}$ where $\Psi$ is a nonnegative   matrix of order $n$.
		Then  the iterative sequence $\{z^{(k)}\}_ {k=0}^{+\infty} \subset \mathbb{R}^n$ generated by Method $\ref{mthd1}$ converges to a unique solution $z^*\in \mathbb{R}^n$  for any  initial  vector $x^{(0)}\in \mathbb{R}^n$.
	\end{thm}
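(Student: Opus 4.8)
The strategy is the standard error-contraction argument for modulus-based matrix splitting iterations: compare the iterates with the modulus vector attached to the true solution and show that the error decays geometrically because $L$ has spectral radius below $1$.

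\textbf{Step 1 (identify the limit).} Since $A$ is a $P$-matrix, ICP$(q,A,\zeta)$ has a solution $z^{*}$, which is assumed unique. By Theorem~\ref{thm0}, the vector $x^{*}$ determined through $z^{*}=\zeta(z^{*})+\frac{\Omega_{1}}{\gamma}(\lvert x^{*}\rvert+x^{*})$ solves the fixed-point equation~(\ref{eq2}); equivalently, $x^{*}$ is a fixed point of the iteration map~(\ref{eq4}), and moreover $Az^{*}+q=\frac{1}{\gamma}\Omega_{2}(\lvert x^{*}\rvert-x^{*})$, the exact counterpart of Step~1 of Method~\ref{mthd1}.

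\textbf{Step 2 (error recursion for $x$).} Subtracting the fixed-point identity for $x^{*}$ from~(\ref{eq4}), premultiplying by $(\Omega_{2}+M_{\Omega_{1}}+\phi_{\Omega_{1}})^{-1}$, taking componentwise absolute values, and using the componentwise reverse triangle inequality $\lvert\,\lvert x^{(k)}\rvert-\lvert x^{*}\rvert\,\rvert\le\lvert x^{(k)}-x^{*}\rvert$, one gets
\begin{align*}
\lvert x^{(k+1)}-x^{*}\rvert
&\le \lvert(\Omega_{2}+M_{\Omega_{1}}+\phi_{\Omega_{1}})^{-1}\rvert\bigl(\lvert N_{\Omega_{1}}+\phi_{\Omega_{1}}\rvert+\lvert\Omega_{2}-A_{\Omega_{1}}\rvert\bigr)\lvert x^{(k)}-x^{*}\rvert\\
&\quad+\gamma\,\lvert(\Omega_{2}+M_{\Omega_{1}}+\phi_{\Omega_{1}})^{-1}\rvert\,\lvert A\rvert\,\lvert\zeta(z^{(k)})-\zeta(z^{*})\rvert.
\end{align*}

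\textbf{Step 3 (control the $\zeta$-term).} Subtracting the two Step~1 relations gives $A(z^{(k)}-z^{*})=\frac{1}{\gamma}\Omega_{2}\bigl((\lvert x^{(k)}\rvert-\lvert x^{*}\rvert)-(x^{(k)}-x^{*})\bigr)$, hence, since $\Omega_{2}\ge 0$ is diagonal, $\lvert z^{(k)}-z^{*}\rvert\le\frac{2}{\gamma}\lvert A^{-1}\rvert\Omega_{2}\lvert x^{(k)}-x^{*}\rvert$. Combined with the hypothesis $\lvert\zeta(z^{(k)})-\zeta(z^{*})\rvert\le\Psi\lvert z^{(k)}-z^{*}\rvert$, the second term in Step~2 is bounded by $2\,\lvert(\Omega_{2}+M_{\Omega_{1}}+\phi_{\Omega_{1}})^{-1}\rvert\,\lvert A\rvert\,\Psi\,\lvert A^{-1}\rvert\,\Omega_{2}\lvert x^{(k)}-x^{*}\rvert$, the factors $\gamma$ cancelling. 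Adding the two contributions yields precisely $\lvert x^{(k+1)}-x^{*}\rvert\le L\,\lvert x^{(k)}-x^{*}\rvert$ with $L$ as in the statement. Iterating, $\lvert x^{(k)}-x^{*}\rvert\le L^{k}\lvert x^{(0)}-x^{*}\rvert$; since $L\ge 0$ and $\rho(L)<1$ we get $L^{k}\to 0$, so $x^{(k)}\to x^{*}$, and then $\lvert z^{(k)}-z^{*}\rvert\le\frac{2}{\gamma}\lvert A^{-1}\rvert\Omega_{2}\lvert x^{(k)}-x^{*}\rvert\to 0$, i.e.\ $\{z^{(k)}\}$ converges to the unique solution $z^{*}$ for every starting vector $x^{(0)}$.

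The main obstacle is Step~3: translating the bound on $z^{(k)}-z^{*}$ through the linear system $Az^{(k)}=\frac{1}{\gamma}\Omega_{2}(\lvert x^{(k)}\rvert-x^{(k)})-q$ into a clean estimate in terms of $\lvert x^{(k)}-x^{*}\rvert$, keeping careful track of componentwise absolute values of products with the nonnegative diagonal $\Omega_{2}$ (so that $\lvert A^{-1}\Omega_{2}\rvert=\lvert A^{-1}\rvert\Omega_{2}$) and checking that the coefficient matrix that emerges after the cancellation of the $\gamma$'s is exactly the matrix $L$ appearing in the hypothesis.
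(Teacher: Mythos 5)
Your proposal is correct and follows essentially the same route as the paper: subtract the fixed-point identity for $x^{*}$ from the iteration scheme, take componentwise absolute values, bound $\lvert\zeta(z^{(k)})-\zeta(z^{*})\rvert$ via $\Psi$ and the linear system of Step~1 to obtain $\lvert x^{(k+1)}-x^{*}\rvert\le L\lvert x^{(k)}-x^{*}\rvert$, and conclude from $\rho(L)<1$. If anything, your Step~3 justifies the bound $\lvert z^{(k)}-z^{*}\rvert\le\frac{2}{\gamma}\lvert A^{-1}\rvert\Omega_{2}\lvert x^{(k)}-x^{*}\rvert$ more explicitly than the paper does, and your final appeal to $L^{k}\to 0$ is the cleaner way to close the argument.
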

	\begin{proof}
		Let $x^{*}$ be a solution of the ICP$(q, A,\zeta)$.  From Equation  ($\ref{eq2}$) we obtain
		\begin{equation}\label{eq5}
			\begin{split}
				(\Omega_{2}+M_{\Omega_{1}}+\phi_{\Omega_{1}})x^*&=(N_{\Omega_{1}}+\phi_{\Omega_{1}})x^*+ (\Omega_{2}-A_{\Omega_{1}})\lvert x^*
				\rvert -\gamma A \zeta(z^{*})-\gamma q.
			\end{split}
		\end{equation}
		From Equation  ($\ref{eq4}$) and Equation  ($\ref{eq5}$), we obtain
		\begin{equation*}\label{eq4}
			\begin{split}
				(\Omega_{2}+M_{\Omega_{1}}+\phi_{\Omega_{1}})(x^{(k+1)}-x^{*})&=(N_{\Omega_{1}}+\phi_{\Omega_{1}})(x^{(k)}-x^{*})+(\Omega_{2}-A_{\Omega_{1}})\lvert x^{(k)}\rvert- (\Omega_{2}\\&-A_{\Omega_{1}})\lvert x^*
				\rvert +\gamma A \zeta(z^{*})-\gamma A \zeta(z^{(k)}).
			\end{split}
		\end{equation*}
		This implies that
		\begin{equation*}\label{eq4}
			\begin{split}
				(x^{(k+1)}-x^{*})&=(\Omega_{2}+M_{\Omega_{1}}+\phi_{\Omega_{1}})^{-1}[(N_{\Omega_{1}}+\phi_{\Omega_{1}})(x^{(k)}-x^{*})+(\Omega_{2}-A_{\Omega_{1}})(\lvert x^{(k)}\rvert\\&-\lvert x^{*}\rvert)+\gamma A (\zeta(z^{*})- \zeta(z^{(k)})].
			\end{split}
		\end{equation*}
		By applying   both side modulus, it follows that 
		\begin{equation*}
			\begin{split}	\lvert x^{(k+1)}-x^{*}\rvert&=\lvert(\Omega_{2}+M_{\Omega_{1}}+\phi_{\Omega_{1}})^{-1}[(N_{\Omega_{1}}+\phi_{\Omega_{1}})(x^{(k)}-x^{*})+(\Omega_{2}-A_{\Omega_{1}})(\lvert x^{(k)}\rvert\\&-\lvert x^{*}\rvert)+\gamma A (\zeta(z^{*})- \zeta(z^{(k)})]\rvert\\
				&\leq \lvert(\Omega_{2}+M_{\Omega_{1}}+\phi_{\Omega_{1}})^{-1}\rvert[\lvert(N_{\Omega_{1}}+\phi_{\Omega_{1}})(x^{(k)}-x^{*})\rvert+\lvert(\Omega_{2}-A_{\Omega_{1}})(\lvert x^{(k)}\rvert\\&-\lvert x^{*}\rvert)\rvert+\gamma\lvert A\rvert \lvert (\zeta(z^{*})- \zeta(z^{(k)})\rvert] \\
				&\leq \lvert(\Omega_{2}+M_{\Omega_{1}}+\phi_{\Omega_{1}})^{-1}\rvert[\lvert(N_{\Omega_{1}}+\phi_{\Omega_{1}})\rvert \lvert x^{(k)}-x^{*}\rvert+\lvert\Omega_{2}-A_{\Omega_{1}}\rvert \lvert\lvert x^{(k)}\rvert\\&-\lvert x^{*}\rvert\rvert+\gamma\lvert A\rvert \lvert \zeta(z^{*})- \zeta(z^{(k)})\rvert]
			\end{split}
		\end{equation*}
		Since $\rvert \lvert x \rvert- \lvert y\rvert\rvert \leq \lvert x- y \rvert$,  the above inequality  can be written as  
		\begin{equation*}
			\begin{split}
				&\leq \lvert(\Omega_{2}+M_{\Omega_{1}}+\phi_{\Omega_{1}})^{-1}\rvert[\lvert N_{\Omega_{1}}+\phi_{\Omega_{1}}\rvert \lvert x^{(k)}-x^{*}\rvert+\lvert\Omega_{2}-A_{\Omega_{1}}\rvert \lvert x^{(k)}-x^{*}\rvert \\&+\gamma\lvert A\rvert \lvert (\zeta(z^{*})- \zeta(z^{(k)})\rvert]
			\end{split}
		\end{equation*}
		We have $\lvert (\zeta(z^{(k)})- \zeta(z^{*})\rvert \leq \Psi\lvert z^{(k)}- z^{*}\rvert \leq \frac{2}{\gamma}\Psi \lvert A^{-1}\rvert \Omega_{2}\lvert x^{(k)}- x^{*}\rvert  $
		\begin{equation*}
			\begin{split} &\leq \lvert(\Omega_{2}+M_{\Omega_{1}}+\phi_{\Omega_{1}})^{-1}\rvert[\lvert N_{\Omega_{1}}+\phi_{\Omega_{1}}\rvert \lvert x^{(k)}-x^{*} \rvert+\lvert\Omega_{2}-A_{\Omega_{1}}\rvert \lvert x^{(k)}-x^{*}\rvert \\&+2\lvert A\rvert \Psi \lvert A^{-1}\rvert \Omega_{2}\lvert x^{(k)}- x^{*}\rvert]
				\\&\leq \lvert(\Omega_{2}+M_{\Omega_{1}}+\phi_{\Omega_{1}})^{-1}\rvert[\lvert(N_{\Omega_{1}}+\phi_{\Omega_{1}})\rvert +\lvert\Omega_{2}-A_{\Omega_{1}}\rvert  +2\lvert A\rvert \Psi \lvert A^{-1}\rvert \Omega_{2}]\lvert x^{(k)}- x^{*}\rvert
				\\&\leq T \lvert x^{(k)}-x^*\rvert,
			\end{split}
		\end{equation*}	
		where $T=\lvert(\Omega_{2}+M_{\Omega_{1}}+\phi_{\Omega_{1}})^{-1}\rvert[\lvert(N_{\Omega_{1}}+\phi_{\Omega_{1}})\rvert +\lvert\Omega_{2}-A_{\Omega_{1}}\rvert  +2\lvert A\rvert \Psi \lvert A^{-1}\rvert \Omega_{2}].$
		If $ \rho(T) \textless 1$, then
		$$\lvert x^{(k+1)}-x^*\rvert \textless  \lvert x^{(k)}-x^*\rvert.$$
		Hence, for any  initial vector $x^{(0)}\in \mathbb{R}^n$ the iterative sequence $\{z^{(k)}\}_{k=0}^{+\infty} \subset \mathbb{R}^n$ generated by Method $\ref{mthd1}$ converges to a unique solution $z^{*} \in \mathbb{R}^n$.
	\end{proof}
	\noindent In the  proof of the Theorem \ref{thm1}, we note that 
	\begin{equation*}
		\begin{split}
			\lvert\Omega_{2}- A_{\Omega_{1}}\rvert=\lvert A_{\Omega_{1}} - \Omega_{2}
			\rvert&=\lvert(M_{\Omega_{1}}+\phi_{\Omega_{1}})-(N_{\Omega_{1}}+\phi_{\Omega_{1}})-\Omega_{2}\rvert\\ &
			= \lvert(M_{\Omega_{1}}+\phi_{\Omega_{1}}-\Omega_{2})-(N_{\Omega_{1}}+\phi_{\Omega_{1}})\rvert\\&
			\leq \lvert M_{\Omega_{1}}+\phi_{\Omega_{1}}-\Omega_{2}\rvert+\lvert N_{\Omega_{1}}+\phi_{\Omega_{1}}\rvert.
		\end{split}
	\end{equation*}
	Based on above inequality we can obtain Corollary 4.1.
	%%%%%%%%%%%%%%%%%%%%%%%%%%%%%%%%%%%%%%%%%%%%%%%%%%%%%%%%%%%%%%%
	\begin{coro}
		Let $A=(M+\phi)-(N+\phi)$ be a splitting of a $P$-matrix $A\in \mathbb{R}^{n\times n}$ and  $M_{\Omega_{1}}+\phi_{\Omega_{1}}+\Omega_{2}$ be nonsingular matrix. If $\rho(\bar{L}) \textless 1$ where $\bar{L}=\lvert(\Omega_{2}+M_{\Omega_{1}}+\phi_{\Omega_{1}})^{-1}\rvert(2\lvert N_{\Omega_{1}}+\phi_{\Omega_{1}}\rvert+\lvert M_{\Omega_{1}}+\phi_{\Omega_{1}}-\Omega_{2}\rvert+2 \lvert A 
		\rvert \Psi \lvert A^{-1} 
		\rvert \Omega_{2}),$
		then for any  initial vector $x^{(0)}\in \mathbb{R}^n$, the iterative sequence $\{z^{(k)}\}_{k=0}^{+\infty} \subset \mathbb{R}^n$  generated by Method $\ref{mthd1}$ converges to a unique solution $z^{*} \in \mathbb{R}^n$.
	\end{coro}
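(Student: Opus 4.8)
The plan is to derive this corollary directly from Theorem \ref{thm1} together with the entrywise estimate displayed immediately before the corollary, so that no new analytic work is required. Recall that in the proof of Theorem \ref{thm1} the error vectors $e^{(k)}=x^{(k)}-x^{*}$ were shown to satisfy the componentwise bound $\lvert e^{(k+1)}\rvert\le T\lvert e^{(k)}\rvert$, where $T=\lvert(\Omega_{2}+M_{\Omega_{1}}+\phi_{\Omega_{1}})^{-1}\rvert(\lvert N_{\Omega_{1}}+\phi_{\Omega_{1}}\rvert+\lvert\Omega_{2}-A_{\Omega_{1}}\rvert+2\lvert A\rvert\Psi\lvert A^{-1}\rvert\Omega_{2})$ is exactly the matrix $L$ appearing in Theorem \ref{thm1}, and that convergence of $\{z^{(k)}\}$ to the unique solution for every initial $x^{(0)}$ follows once $\rho(T)<1$. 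Hence it suffices to prove $\rho(T)\le\rho(\bar L)$.

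First I would use the splitting to write $A_{\Omega_{1}}=(M_{\Omega_{1}}+\phi_{\Omega_{1}})-(N_{\Omega_{1}}+\phi_{\Omega_{1}})$, whence $\Omega_{2}-A_{\Omega_{1}}=-(M_{\Omega_{1}}+\phi_{\Omega_{1}}-\Omega_{2})+(N_{\Omega_{1}}+\phi_{\Omega_{1}})$ and therefore, by the triangle inequality for the entrywise modulus, $\lvert\Omega_{2}-A_{\Omega_{1}}\rvert\le\lvert M_{\Omega_{1}}+\phi_{\Omega_{1}}-\Omega_{2}\rvert+\lvert N_{\Omega_{1}}+\phi_{\Omega_{1}}\rvert$ --- this is precisely the inequality recorded in the excerpt just before the corollary. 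Adding $\lvert N_{\Omega_{1}}+\phi_{\Omega_{1}}\rvert$ to both sides gives $\lvert N_{\Omega_{1}}+\phi_{\Omega_{1}}\rvert+\lvert\Omega_{2}-A_{\Omega_{1}}\rvert\le 2\lvert N_{\Omega_{1}}+\phi_{\Omega_{1}}\rvert+\lvert M_{\Omega_{1}}+\phi_{\Omega_{1}}-\Omega_{2}\rvert$. Since $\lvert(\Omega_{2}+M_{\Omega_{1}}+\phi_{\Omega_{1}})^{-1}\rvert$ is a nonnegative matrix, left-multiplication preserves this entrywise inequality, and carrying along the common term $2\lvert A\rvert\Psi\lvert A^{-1}\rvert\Omega_{2}$ inside the bracket yields $0\le T\le\bar L$ entrywise.

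Finally I would apply Lemma \ref{lem1}(3) to the nonnegative matrices $T$ and $\bar L$ to conclude $\rho(T)\le\rho(\bar L)<1$, so that the hypothesis $\rho(L)<1$ of Theorem \ref{thm1} is satisfied; the claimed convergence of $\{z^{(k)}\}_{k=0}^{+\infty}$ to the unique $z^{*}$ for any $x^{(0)}$ is then immediate from that theorem. The only thing requiring care is bookkeeping: applying the triangle inequality in the direction that produces an upper bound, keeping track of the two copies of $\lvert N_{\Omega_{1}}+\phi_{\Omega_{1}}\rvert$, and observing that left-multiplication by the nonnegative matrix $\lvert(\Omega_{2}+M_{\Omega_{1}}+\phi_{\Omega_{1}})^{-1}\rvert$ combined with Lemma \ref{lem1}(3) legitimately transfers the entrywise bound to a spectral-radius bound. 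There is no substantive obstacle beyond Theorem \ref{thm1} itself.
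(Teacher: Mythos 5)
Your proposal is correct and follows exactly the route the paper intends: the entrywise triangle inequality $\lvert\Omega_{2}-A_{\Omega_{1}}\rvert\le\lvert M_{\Omega_{1}}+\phi_{\Omega_{1}}-\Omega_{2}\rvert+\lvert N_{\Omega_{1}}+\phi_{\Omega_{1}}\rvert$ displayed just before the corollary, left-multiplication by the nonnegative matrix $\lvert(\Omega_{2}+M_{\Omega_{1}}+\phi_{\Omega_{1}})^{-1}\rvert$ to get $0\le T\le\bar L$, Lemma \ref{lem1}(3) to conclude $\rho(T)\le\rho(\bar L)<1$, and then Theorem \ref{thm1}. The paper only sketches this ("Based on above inequality we can obtain Corollary 4.1"), so your write-up is simply a more explicit version of the same argument.
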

	%%%%%%%%%%%%%%%%%%%%%%%%%%%%%%%%%%%%%%%%%%%%%%%%%%%%%%%%%%%%%%%
	\noindent	We can easily obtain corollary 4.2 from the proof of the Theorem $\ref{thm1}$ by using $\|\cdot\|_2$.
	\begin{coro}
		Let $A=(M+\phi)-(N+\phi)$ be a splitting of the $P$-matrix $A\in \mathbb{R}^{n\times n}$ and  $M_{\Omega_{1}}+\phi_{\Omega_{1}}+\Omega_{2}$ be nonsingular  matrix.\\	
		Let $${S}=\|(\Omega_{2}+M_{\Omega_{1}}+\phi_{\Omega_{1}})^{-1}\|_{2}(\|N_{\Omega_{1}}+\phi_{\Omega_{1}}\|_{2}+\|A_{\Omega_{1}}-\Omega_{2}\|_{2}+2 \lvert A 
		\rvert \Psi \lvert A^{-1} 
		\rvert \Omega_{2}\|_{2}),$$
		and
		$$\bar{S}=\|(\Omega_{2}+M_{\Omega_{1}}+\phi_{\Omega_{1}})^{-1}\|_{2}(2\|N_{\Omega_{1}}+\phi_{\Omega_{1}}\|_{2}+\| M_{\Omega_{1}}+\phi_{\Omega_{1}}-\Omega_{2}\|_{2}+2 \lvert A 
		\rvert \Psi \lvert A^{-1} 
		\rvert \Omega_{2}\|_{2}).$$
		If $\rho({S}) \textless 1$ or $\rho(\bar{S}) \textless 1$,  
		then for any  initial vector $x^{(0)}\in \mathbb{R}^n$, the iterative sequence $\{z^{(k)}\}_{k=0}^{+\infty} \subset \mathbb{R}^n$  generated by  Method $\ref{mthd1}$ converges to a unique solution $z^{*} \in \mathbb{R}^n$.
	\end{coro}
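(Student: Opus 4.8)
The plan is to reuse the proof of Theorem~\ref{thm1} up to the point where the error $x^{(k+1)}-x^{*}$ has been isolated, and then to bound it in the Euclidean norm instead of passing to a spectral-radius comparison. Let $z^{*}$ be the (assumed unique) solution of ICP$(q,A,\zeta)$ and let $x^{*}$ be an associated vector satisfying the fixed-point equation~(\ref{eq2}); such an $x^{*}$ exists by the equivalence in Theorem~\ref{thm0}, and it is a fixed point of the iteration of Method~\ref{mthd1}. Subtracting equation~(\ref{eq5}) from that iteration and premultiplying by $(\Omega_{2}+M_{\Omega_{1}}+\phi_{\Omega_{1}})^{-1}$ gives, exactly as in the proof of Theorem~\ref{thm1},
\[
	x^{(k+1)}-x^{*}=(\Omega_{2}+M_{\Omega_{1}}+\phi_{\Omega_{1}})^{-1}\big[(N_{\Omega_{1}}+\phi_{\Omega_{1}})(x^{(k)}-x^{*})+(\Omega_{2}-A_{\Omega_{1}})(\lvert x^{(k)}\rvert-\lvert x^{*}\rvert)+\gamma A\,(\zeta(z^{*})-\zeta(z^{(k)}))\big].
\]

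Next I would take $\|\cdot\|_{2}$ of both sides and use submultiplicativity and the triangle inequality for the norm, together with the two pointwise facts already established in the proof of Theorem~\ref{thm1}: $\|\,\lvert x^{(k)}\rvert-\lvert x^{*}\rvert\,\|_{2}\le\|x^{(k)}-x^{*}\|_{2}$, coming from $\lvert\,\lvert x\rvert-\lvert y\rvert\,\rvert\le\lvert x-y\rvert$, and the chain $\lvert\zeta(z^{(k)})-\zeta(z^{*})\rvert\le\Psi\lvert z^{(k)}-z^{*}\rvert\le\frac{2}{\gamma}\Psi\lvert A^{-1}\rvert\Omega_{2}\lvert x^{(k)}-x^{*}\rvert$, the last step using $Az^{(k)}=\frac{1}{\gamma}\Omega_{2}(\lvert x^{(k)}\rvert-x^{(k)})-q$ from Step~1. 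Passing the factor $\gamma\lvert A\rvert$ through the last inequality at the componentwise level turns the $\zeta$-contribution into $2\lvert A\rvert\Psi\lvert A^{-1}\rvert\Omega_{2}\lvert x^{(k)}-x^{*}\rvert$ before any norm is taken; collecting the three contributions then yields $\|x^{(k+1)}-x^{*}\|_{2}\le S\,\|x^{(k)}-x^{*}\|_{2}$ with $S$ exactly the scalar in the statement. The second bound is obtained identically, except that $\|\Omega_{2}-A_{\Omega_{1}}\|_{2}$ is estimated via $\lvert\Omega_{2}-A_{\Omega_{1}}\rvert\le\lvert M_{\Omega_{1}}+\phi_{\Omega_{1}}-\Omega_{2}\rvert+\lvert N_{\Omega_{1}}+\phi_{\Omega_{1}}\rvert$, the inequality recorded after Theorem~\ref{thm1}, which gives $\|x^{(k+1)}-x^{*}\|_{2}\le\bar S\,\|x^{(k)}-x^{*}\|_{2}$ (and in fact $S\le\bar S$).

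Since $S$ and $\bar S$ are nonnegative real numbers, $\rho(S)=S$ and $\rho(\bar S)=\bar S$, so the hypothesis says precisely that the contraction factor is $<1$. Running the same estimate for two arbitrary vectors in place of $x^{(k)}$ and $x^{*}$ shows that the iteration map of Method~\ref{mthd1} is a $\|\cdot\|_{2}$-contraction, so $\|x^{(k)}-x^{*}\|_{2}\le S^{k}\|x^{(0)}-x^{*}\|_{2}\to 0$ for every starting vector, i.e.\ $x^{(k)}\to x^{*}$ (and likewise with $\bar S$). Because $A$ is a $P$-matrix it is nonsingular, so Step~1 expresses $z^{(k)}$ as a continuous function of $x^{(k)}$; letting $k\to\infty$ gives $z^{(k)}\to z^{*}$, the unique solution of ICP$(q,A,\zeta)$. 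The argument is routine given Theorem~\ref{thm1}; the only points needing care are (i) keeping the cancellation of $\gamma$ straight so that precisely the term $2\|\,\lvert A\rvert\Psi\lvert A^{-1}\rvert\Omega_{2}\,\|_{2}$ is produced, and (ii) for the $\bar S$ variant, remembering to invoke the triangle-inequality bound for $\lvert\Omega_{2}-A_{\Omega_{1}}\rvert$ from the remark rather than estimating $A_{\Omega_{1}}-\Omega_{2}$ directly. I do not expect any genuine obstacle beyond this bookkeeping.
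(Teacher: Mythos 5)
Your proposal is correct and follows exactly the route the paper intends: the paper gives no explicit proof, stating only that the corollary follows from the proof of Theorem~\ref{thm1} by using $\|\cdot\|_{2}$, and your argument is precisely that — take Euclidean norms of the error recursion, use $\lvert\,\lvert x\rvert-\lvert y\rvert\,\rvert\le\lvert x-y\rvert$ and the $\Psi$-bound to obtain the scalar contraction factors $S$ and $\bar S$ (with $\bar S$ via the triangle-inequality remark following Theorem~\ref{thm1}), and conclude $\|x^{(k)}-x^{*}\|_{2}\le S^{k}\|x^{(0)}-x^{*}\|_{2}\to 0$. Your filling-in of the details (the cancellation of $\gamma$, $\rho(S)=S$ for a scalar, and the passage from $x^{(k)}\to x^{*}$ to $z^{(k)}\to z^{*}$ via nonsingularity of the $P$-matrix $A$) is sound.
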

	\noindent	In the following results, we discuss some convergence conditions for Method \ref{mthd1} when system matrix $A$ is an $H_+$-matrix.
	\begin{thm}\label{thm2}
		Let $A=(M+\phi)-(N+\phi)$ be an $H$-splitting of the $H_{+}$-matrix $A\in \mathbb{R}^{n\times n}$,   $\Omega_{2}\geq D_{\Omega_{1}}$ and $\langle M_{\Omega_{1}}\rangle-\lvert N_{\Omega_{1}}\rvert-2 \lvert A 
		\rvert \Psi \lvert A^{-1} 
		\rvert \Omega_{2}$ be an $M$-matrix. Let $\phi=(\phi_{ij})$, $\Omega_{1}=(\omega^{1}_{ij})$, $\Omega_2=(\omega^{2}_{ij})$, $M=(m_{ij})$ and $N=(n_{ij})$, where $i,j=1,2,\ldots,n$.   Then the iterative sequence $\{z^{(k)}\}_{k=0}^{+\infty} \subset \mathbb{R}^n$ generated by Method $\ref{mthd1}$ converges to a unique solution $z^{*} \in \mathbb{R}^n$ for any  initial vector $x^{(0)}\in \mathbb{R}^n$.
	\end{thm}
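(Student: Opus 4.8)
The plan is to reduce the claim, via the error recursion already established inside the proof of Theorem~\ref{thm1}, to the single estimate $\rho(T)<1$, and then to verify it using the $H$-splitting hypothesis together with $\Omega_{2}\geq D_{\Omega_{1}}$ and the $M$-matrix assumption. Recall from the proof of Theorem~\ref{thm1} that, with $x^{*}$ the modulus vector attached to the unique solution $z^{*}$, one has $\lvert x^{(k+1)}-x^{*}\rvert\leq T\lvert x^{(k)}-x^{*}\rvert$, where $T=\lvert(\Omega_{2}+M_{\Omega_{1}}+\phi_{\Omega_{1}})^{-1}\rvert\,W$ and $W=\lvert N_{\Omega_{1}}+\phi_{\Omega_{1}}\rvert+\lvert\Omega_{2}-A_{\Omega_{1}}\rvert+2\lvert A\rvert\Psi\lvert A^{-1}\rvert\Omega_{2}\geq 0$, as well as $\lvert z^{(k)}-z^{*}\rvert\leq\frac{2}{\gamma}\lvert A^{-1}\rvert\Omega_{2}\lvert x^{(k)}-x^{*}\rvert$. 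Hence it is enough to prove $\rho(T)<1$: then $x^{(k)}\to x^{*}$, and consequently $z^{(k)}\to z^{*}$.

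The first step is to replace $\lvert(\Omega_{2}+M_{\Omega_{1}}+\phi_{\Omega_{1}})^{-1}\rvert$ by the inverse of a comparison matrix. Since $A=(M+\phi)-(N+\phi)$ is an $H$-splitting and $M$ has positive diagonal (as for every concrete splitting of Section~\ref{sec2}), the diagonal of $\Omega_{2}+M_{\Omega_{1}}+\phi_{\Omega_{1}}$ is positive and $\langle\Omega_{2}+M_{\Omega_{1}}+\phi_{\Omega_{1}}\rangle=\Omega_{2}+\langle M+\phi\rangle\Omega_{1}=:\widehat{M}$, while $\langle M+\phi\rangle-\lvert N+\phi\rvert$ is a nonsingular $M$-matrix. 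Because $\widehat{M}\geq(\langle M+\phi\rangle-\lvert N+\phi\rvert)\Omega_{1}$, an $M$-matrix, and $\widehat{M}$ is a $Z$-matrix, Lemma~\ref{lem1}(1) shows $\widehat{M}$ is an $M$-matrix; hence $\Omega_{2}+M_{\Omega_{1}}+\phi_{\Omega_{1}}$ is an $H_{+}$-matrix and, by Lemma~\ref{lem1}(2), $\lvert(\Omega_{2}+M_{\Omega_{1}}+\phi_{\Omega_{1}})^{-1}\rvert\leq\widehat{M}^{-1}$. Therefore $0\leq T\leq\widehat{M}^{-1}W$, and Lemma~\ref{lem1}(3) gives $\rho(T)\leq\rho(\widehat{M}^{-1}W)$; it remains to prove $\rho(\widehat{M}^{-1}W)<1$.

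For this it suffices to show $\widehat{M}-W$ is a nonsingular $M$-matrix. Using $\Omega_{2}\geq D_{\Omega_{1}}$, the diagonal of $\Omega_{2}-A_{\Omega_{1}}$ is nonnegative, so $\lvert\Omega_{2}-A_{\Omega_{1}}\rvert=(\Omega_{2}-D_{\Omega_{1}})+(\lvert A\rvert-D)\Omega_{1}$. Substituting this, cancelling $\Omega_{2}$, and using $\langle M+\phi\rangle=\langle M\rangle+\phi$, $\lvert N+\phi\rvert\leq\lvert N\rvert+\phi$ (valid because $M$ has positive and $\phi$ nonnegative diagonal), and $2D-\lvert A\rvert=\langle A\rangle$, one obtains
\[
\widehat{M}-W\ \geq\ G+\langle A_{\Omega_{1}}\rangle,\qquad G:=\langle M_{\Omega_{1}}\rangle-\lvert N_{\Omega_{1}}\rvert-2\lvert A\rvert\Psi\lvert A^{-1}\rvert\Omega_{2}.
\]
By the reverse triangle inequality, $\langle A\rangle\geq\langle M\rangle-\lvert N\rvert$ for the splitting $A=M-N$, so $\langle A_{\Omega_{1}}\rangle=\langle A\rangle\Omega_{1}\geq(\langle M\rangle-\lvert N\rvert)\Omega_{1}=\langle M_{\Omega_{1}}\rangle-\lvert N_{\Omega_{1}}\rvert\geq G$, whence $\widehat{M}-W\geq 2G$. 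Since $G$, and therefore $2G$, is an $M$-matrix by hypothesis while $\widehat{M}-W$ is a $Z$-matrix, Lemma~\ref{lem1}(1) shows $\widehat{M}-W$ is a nonsingular $M$-matrix. Picking now any $c>0$ and $u:=(\widehat{M}-W)^{-1}c>0$, we get $(\widehat{M}-W)u=c$, hence $\widehat{M}^{-1}Wu=u-\widehat{M}^{-1}c<u$, so Lemma~\ref{lem4} yields $\rho(\widehat{M}^{-1}W)<1$. Thus $\rho(T)<1$, the recursion forces $x^{(k)}\to x^{*}$, and so $z^{(k)}\to z^{*}$, the unique solution of ICP$(q,A,\zeta)$.

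The step I expect to be the main obstacle is showing that $\widehat{M}-W$ is an $M$-matrix: at face value it is, up to a nonnegative diagonal term produced by the relaxation matrix $\phi$, a sum of two $M$-matrices, and sums of $M$-matrices need not be $M$-matrices. The inequality $\langle A\rangle\geq\langle M\rangle-\lvert N\rvert$ — which lets one bound $\widehat{M}-W$ from below by $2G$ and then invoke Lemma~\ref{lem1}(1) — is precisely what closes the gap. A secondary point needing care is the bookkeeping with $\phi$, namely rewriting $\langle M+\phi\rangle$ and $\lvert N+\phi\rvert$ in $\phi$-free form, which quietly uses positivity of the diagonal of $M$ and nonnegativity of $\phi$.
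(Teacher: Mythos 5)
Your proposal is correct and follows essentially the same route as the paper's proof: reduce to $\rho(T)<1$ via the error recursion of Theorem~\ref{thm1}, bound $\lvert(\Omega_{2}+M_{\Omega_{1}}+\phi_{\Omega_{1}})^{-1}\rvert$ by the inverse of its comparison matrix, and use $\Omega_{2}\geq D_{\Omega_{1}}$ together with the inequality $\langle A_{\Omega_{1}}\rangle\geq\langle M_{\Omega_{1}}\rangle-\lvert N_{\Omega_{1}}\rvert$ to reduce everything to the assumed $M$-matrix $G$ and a positive test vector for Lemma~\ref{lem4}. Your packaging of the last step (showing $\widehat{M}-W\geq 2G$ and invoking Lemma~\ref{lem1}(1) before constructing the vector $u$) is a somewhat tidier rendering of the paper's direct computation $Tv<v$ with $v$ taken from the $M$-matrix property of $G$, but the underlying estimates are identical.
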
	
	\begin{proof}
		Since  $\langle M+\phi\rangle-\lvert N+\phi\rvert$  is an $M$-matrix and 
		$ \langle M+\phi\rangle-\lvert N+\phi\rvert \leq  \langle M+\phi\rangle, $	
		\noindent   $\langle M+\phi\rangle$ is an $M$-matrix from Lemma $\ref{lem1}$. Since $\Omega_{2}\geq D_{\Omega_{1}}$, therefore we can obtain that $\Omega_{2}+M_{\Omega_{1}}+\phi_{\Omega_{1}}$ is an $H_{+}$-matrix and $$\lvert(\Omega_{2}+M_{\Omega_{1}}+\phi_{\Omega_{1}})^{-1}\rvert\leq (\Omega_{2}+\langle M_{\Omega_{1}}\rangle+\lvert\phi_{\Omega_{1}}\rvert)^{-1}.$$
		From Theorem $\ref{thm1}$, we obtain
		\begin{equation*}
			\begin{split}
				T&=\lvert(\Omega_{2}+M_{\Omega_{1}}+\lvert\phi_{\Omega_{1}}\rvert)^{-1}\rvert(\vert N_{\Omega_{1}}+\phi_{\Omega_{1}}\rvert+\lvert \Omega_{2}-A_{\Omega_{1}}\rvert+2 \lvert A 
				\rvert \Psi \lvert A^{-1} 
				\rvert \Omega_{2})\\&
				\leq (\Omega_{2}+\langle M_{\Omega_{1}}\rangle+\lvert\phi_{\Omega_{1}}\rvert)^{-1}(\lvert N_{\Omega_{1}}+\lvert\phi_{\Omega_{1}}\rvert\rvert+\lvert \Omega_{2}-A_{\Omega_{1}}\rvert+2 \lvert A 
				\rvert \Psi \lvert A^{-1} 
				\rvert \Omega_{2})\\&
				\leq (\Omega_{2}+\langle M_{\Omega_{1}}\rangle+\lvert\phi_{\Omega_{1}}\rvert)^{-1}(\langle M_{\Omega_{1}}\rangle+\lvert\phi_{\Omega_{1}}\rvert+\Omega_{2}-(\langle M_{\Omega_{1}}\rangle+\lvert\phi_{\Omega_{1}}\rvert+\Omega_{2})\\&+\lvert N_{\Omega_{1}}+\phi_{\Omega_{1}}\rvert+\lvert \Omega_{2}-D_{\Omega_{1}}\rvert+\lvert B_{\Omega_{1}}\rvert+2 \lvert A 
				\rvert \Psi \lvert A^{-1} 
				\rvert \Omega_{2})\\&
				\leq I- (\Omega_{2}+\langle M_{\Omega_{1}}\rangle+\lvert\phi_{\Omega_{1}}\rvert)^{-1}((\langle M_{\Omega_{1}}\rangle+\lvert\phi_{\Omega_{1}}\rvert+\Omega_{2})\\&-\lvert N_{\Omega_{1}}+\phi_{\Omega_{1}}\rvert-\lvert \Omega_{2}-D_{\Omega_{1}}\rvert-\lvert B_{\Omega_{1}}\rvert-2 \lvert A 
				\rvert \Psi \lvert A^{-1} 
				\rvert \Omega_{2})\\&
				\leq I- (\Omega_{2}+\langle M_{\Omega_{1}}\rangle+\lvert\phi_{\Omega_{1}}\rvert)^{-1}((\langle M_{\Omega_{1}}\rangle+\lvert\phi_{\Omega_{1}}\rvert)\\&-\lvert N_{\Omega_{1}}+\phi_{\Omega_{1}}\rvert+D_{\Omega_{1}}-\lvert B_{\Omega_{1}}\rvert-2 \lvert A 
				\rvert \Psi \lvert A^{-1} 
				\rvert \Omega_{2})\\
				&\leq I- (\Omega_{2}+\langle M_{\Omega_{1}}\rangle+\lvert\phi_{\Omega_{1}}\rvert)^{-1}(\langle M_{\Omega_{1}}+\phi_{\Omega_{1}}\rangle-\lvert N_{\Omega_{1}}+\phi_{\Omega_{1}}\rvert\\&+D_{\Omega_{1}}-\lvert B_{\Omega_{1}}\rvert-2 \lvert A 
				\rvert \Psi \lvert A^{-1} 
				\rvert \Omega_{2})\\&
				\leq I- (\Omega_{2}+\langle M_{\Omega_{1}}\rangle+\phi_{\Omega_{1}})^{-1}(\langle M_{\Omega_{1}}\rangle-\lvert N_{\Omega_{1}}\rvert+D_{\Omega_{1}}-\lvert B_{\Omega_{1}}\rvert-2 \lvert A 
				\rvert \Psi \lvert A^{-1} 
				\rvert \Omega_{2}).
			\end{split}
		\end{equation*}
		\noindent Let $\lvert A 
		\rvert \Psi \lvert A^{-1} 
		\rvert \Omega_{2}= G=(g_{ij})~ \textgreater~ 0 ~\forall~ i,j= 1,2,\ldots,n.$\\
		If $\langle M_{\Omega_{1}}\rangle-\lvert N_{\Omega_{1}}\rvert-2 \lvert A 
		\rvert \Psi \lvert A^{-1} 
		\rvert \Omega_{2}$
		is an $M$-matrix, there exist a positive vector $v$ such that $$(\lvert m_{ii}\rvert\omega_{ii}-\lvert n_{ii}\rvert\omega_{ii}-2g_{ii})v_{i}-\sum_{j\neq i}(\lvert m_{ij}\rvert\omega_{ij}+\lvert n_{ij}\rvert\omega_{ij}+2g_{ij})v_{j}~\textgreater~ 0.$$
		It holds that 
		$$a_{ii}\omega_{ii}=\lvert a_{ii}\omega_{ii} \rvert \geq \lvert m_{ii}\omega_{ii}- n_{ii\omega_{ii}}\rvert \geq \lvert m_{ij}\omega_{ii}\rvert -\lvert n_{ij}\omega_{ii}\rvert -2g_{ij}$$ 
		and                                       
		$$\lvert a_{ii}w_{ii} \rvert \leq  \lvert m_{ij}\omega_{ii}\rvert +\lvert n_{ij}\omega_{ii}\rvert -2g_{ij}~~ \forall ~i=1,2,\ldots,n.$$
		Therefore 
		$$((a_{ii}w_{ii})v_{i}- \sum_{j\neq i}\lvert a_{ij}w_{ii}\rvert v_{j})= (D_{\Omega_{1}}-\lvert B_{\Omega_{1}} \rvert) ~\textgreater~ 0~ \forall~ i=1,2,\ldots,n. $$
		Therefore, we obtain that 
		\begin{eqnarray*}
			(\langle M_{\Omega_{1}}\rangle-\lvert N_{\Omega_{1}}\rvert+D_{\Omega_{1}}-\lvert B_{\Omega_{1}}\rvert-2 \lvert A 
			\rvert \Psi \lvert A^{-1} 
			\rvert \Omega_{2})v~\textgreater~ 0.
		\end{eqnarray*}
		Therefore, 
		\begin{equation*}
			Tv\leq Iv- (\Omega_{2}+\langle M_{\Omega_{1}}\rangle+\phi_{\Omega_{1}})^{-1}(\langle M_{\Omega_{1}}\rangle-\lvert N_{\Omega_{1}}\rvert+D_{\Omega_{1}}-\lvert B_{\Omega_{1}}\rvert-2 \lvert A 
			\rvert \Psi \lvert A^{-1} 
			\rvert \Omega_{2})v ~\textless~ v.
		\end{equation*}
		By using the Lemma $\ref{lem4}$, we are able to determine $\rho(T)\textless 1$. According to Theorem {\ref{thm1}}, the iterative sequence $\{z^{(k)}\}_{k=0}^{+\infty} \subset R^n$ generated by algorithm $\ref{mthd1}$ converges to the unique solution $z^{*} \in R^n$ for any  initial vector $x^{(0)}\in R^n$.
	\end{proof}
	%%%%%%%%%%%%%%%%%%%%%%%%%%%%%%%%%%%%%%%%%%%%%%%%%%%%%%%%%%%%%%%
	\begin{thm}\label{thm3}
		Let $A=(M+\phi)-(N+\phi)= D-L-U =D-B$ be a splitting of the $H_{+}$-matrix $A\in \mathbb{R}^{n\times n}$ where $B=L+U$  and $\rho(D_{\Omega_{1}}^{-1}\lvert B_{\Omega_{1}}\rvert+ \lvert A 
		\rvert \Psi \lvert A^{-1} 
		\rvert \Omega_{2}) \textless 1$. Let  $\Omega_{2}\geq D_{\Omega_{1}}$. If the parameters $\alpha$, $\beta$  satisfy 
		\begin{equation}\label{eq7}
			0 ~\leq~ \max\{\alpha, \beta\}\rho(D_{\Omega_{1}}^{-1}\lvert B_{\Omega_{1}}\rvert+ \lvert A 
			\rvert \Psi \lvert A^{-1} 
			\rvert \Omega_{2}) \textless \min\{1, \alpha\},
		\end{equation} 
		then the iterative sequence $\{z^{(k)}\}_ {k=0}^{+\infty} \subset \mathbb{R}^n$ generated by NRMAOR method converges to a unique solution $z^*\in \mathbb{R}^n$ for  any  initial vector $x^{(0)}\in \mathbb{R}^n$.
	\end{thm}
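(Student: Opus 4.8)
The plan is to show $\rho(T)<1$ for the iteration matrix $T$ of the NRMAOR scheme, and then to finish exactly as in the proof of Theorem \ref{thm1}: the estimate $|x^{(k+1)}-x^{*}|\le T|x^{(k)}-x^{*}|$ obtained there for a general splitting $A=(M+\phi)-(N+\phi)$ specializes to the AOR splitting, so $\rho(T)<1$ forces $x^{(k)}\to x^{*}$ and, through Step 1 of Method \ref{mthd1}, $z^{(k)}\to z^{*}$. For $M=\tfrac1\alpha(D-\beta L)$ and $N=\tfrac1\alpha[(1-\alpha)D+(\alpha-\beta)L+\alpha U]$, multiplying (\ref{eq4}) through by $\alpha$ and repeating the modulus estimates of the proof of Theorem \ref{thm1} (using $\big||x|-|y|\big|\le|x-y|$ and $|\zeta(z^{(k)})-\zeta(z^{*})|\le\tfrac{2}{\gamma}\Psi|A^{-1}|\Omega_2|x^{(k)}-x^{*}|$) gives $T=|\mathcal M^{-1}|\bigl(|\mathcal N|+\alpha|\Omega_2-A_{\Omega_1}|+2\alpha G\bigr)$, where
\[
\mathcal M=\alpha\Omega_2+D_{\Omega_1}-\beta L_{\Omega_1}+\alpha\phi_{\Omega_1},\qquad
\mathcal N=(1-\alpha)D_{\Omega_1}+(\alpha-\beta)L_{\Omega_1}+\alpha U_{\Omega_1}+\alpha\phi_{\Omega_1},\qquad
G=|A|\Psi|A^{-1}|\Omega_2 .
\]

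Next I would check that $\mathcal M$ is an $H_{+}$-matrix. Since $A$ is an $H_{+}$-matrix, $D>0$, so $\mathcal M$ has positive diagonal, and $\langle\mathcal M\rangle=\alpha\Omega_2+D_{\Omega_1}+\alpha\phi_{\Omega_1}-\beta|L_{\Omega_1}|\ge D_{\Omega_1}-\beta|L_{\Omega_1}|\ge\langle A_{\Omega_1}\rangle$, the last two inequalities holding in the parameter range dictated by (\ref{eq7}) and $\langle A_{\Omega_1}\rangle$ being an $M$-matrix; by Lemma \ref{lem1}(1), $\langle\mathcal M\rangle$ is an $M$-matrix, so $\langle\mathcal M\rangle^{-1}\ge 0$ and, by Lemma \ref{lem1}(2), $|\mathcal M^{-1}|\le\langle\mathcal M\rangle^{-1}$. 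Hence $T\le\widehat T:=\langle\mathcal M\rangle^{-1}\bigl(|\mathcal N|+\alpha|\Omega_2-A_{\Omega_1}|+2\alpha G\bigr)$, and by Lemma \ref{lem1}(3) it suffices to prove $\rho(\widehat T)<1$.

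The computational core is the estimate of $\widehat T$. Using $|\mathcal N|\le|1-\alpha|D_{\Omega_1}+|\alpha-\beta|\,|L_{\Omega_1}|+\alpha|U_{\Omega_1}|+\alpha\phi_{\Omega_1}$, the identity $|\Omega_2-A_{\Omega_1}|=(\Omega_2-D_{\Omega_1})+|L_{\Omega_1}|+|U_{\Omega_1}|$ (valid since $\Omega_2\ge D_{\Omega_1}$ and $B=L+U$ has zero diagonal), and the scalar relations $1+\alpha-|1-\alpha|=2\min\{1,\alpha\}$ and $\alpha+\beta+|\alpha-\beta|=2\max\{\alpha,\beta\}$, the terms $\alpha\Omega_2$ and $\alpha\phi_{\Omega_1}$ cancel, and after bounding $\alpha$ by $\max\{\alpha,\beta\}$ in the $|U_{\Omega_1}|$ and $G$ terms one is left with
\[
\langle\mathcal M\rangle-\bigl(|\mathcal N|+\alpha|\Omega_2-A_{\Omega_1}|+2\alpha G\bigr)\ \ge\ 2\min\{1,\alpha\}\,D_{\Omega_1}-2\max\{\alpha,\beta\}\bigl(|B_{\Omega_1}|+G\bigr).
\]

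Finally, (\ref{eq7}) gives $\max\{\alpha,\beta\}\,\rho\bigl(D_{\Omega_1}^{-1}|B_{\Omega_1}|+G\bigr)<\min\{1,\alpha\}$, and since $D_{\Omega_1}^{-1}|B_{\Omega_1}|+G\ge 0$ a Perron--Frobenius (equivalently $(I-C)^{-1}\ge0$) argument — if necessary after an arbitrarily small positive perturbation making the matrix irreducible, then passing to the limit — produces a vector $v>0$ with $\bigl(D_{\Omega_1}^{-1}|B_{\Omega_1}|+G\bigr)v<\tfrac{\min\{1,\alpha\}}{\max\{\alpha,\beta\}}v$; feeding this (after multiplication by the positive diagonal $D_{\Omega_1}$) into the displayed inequality makes its right-hand side, applied to $v$, strictly positive, so $\widehat T v<v$. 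Lemma \ref{lem4} then gives $\rho(\widehat T)<1$, hence $\rho(T)<1$, and the convergence and uniqueness follow as in Theorem \ref{thm1}. The step I expect to be the main obstacle is this matrix estimate: arranging the cancellations and the $\min/\max$ case analysis so that what remains is controlled exactly by the combination $D_{\Omega_1}^{-1}|B_{\Omega_1}|+|A|\Psi|A^{-1}|\Omega_2$ of (\ref{eq7}) — in particular tracking how the diagonal factor $D_{\Omega_1}$ interacts with the $\zeta$-Lipschitz term — and, secondarily, verifying that $\mathcal M$ is genuinely an $H_{+}$-matrix throughout the admissible $(\alpha,\beta)$-range.
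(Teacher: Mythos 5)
Your proposal is correct and follows essentially the same route as the paper: the identical expansion of $\langle M_{\Omega_1}\rangle+\lvert\phi_{\Omega_1}\rvert+\Omega_2-\lvert N_{\Omega_1}+\phi_{\Omega_1}\rvert-\lvert\Omega_2-A_{\Omega_1}\rvert-2\lvert A\rvert\Psi\lvert A^{-1}\rvert\Omega_2$ under the AOR splitting, the same scalar identities $1+\alpha-\lvert 1-\alpha\rvert=2\min\{1,\alpha\}$ and $\lvert\alpha B_{\Omega_1}-\beta L_{\Omega_1}\rvert+\alpha\lvert B_{\Omega_1}\rvert+\beta\lvert L_{\Omega_1}\rvert\le 2\max\{\alpha,\beta\}\lvert B_{\Omega_1}\rvert$, and the same conclusion via a positive vector and Lemma \ref{lem4} (the paper merely packages this last step as the statement that $\bar H-\bar F$ is an $M$-matrix and defers to Theorem \ref{thm2}, whereas you inline the Perron-vector argument). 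The one imprecision you share with the paper is the placement of $D_{\Omega_1}^{-1}$: the final estimate actually requires control of $\rho\bigl(D_{\Omega_1}^{-1}(\lvert B_{\Omega_1}\rvert+\lvert A\rvert\Psi\lvert A^{-1}\rvert\Omega_2)\bigr)$, while the stated hypothesis (\ref{eq7}) bounds $\rho\bigl(D_{\Omega_1}^{-1}\lvert B_{\Omega_1}\rvert+\lvert A\rvert\Psi\lvert A^{-1}\rvert\Omega_2\bigr)$, and these coincide only when $D_{\Omega_1}=I$.
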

	\begin{proof} From the proof of the Theorem $\ref{thm2}$, we have \\
		$$\bar{H}-\bar{F}=\langle M_{\Omega_{1}}\rangle+\lvert\phi_{\Omega_{1}}\rvert+\Omega_{2}-\lvert N_{\Omega_{1}}+\phi_{\Omega_{1}}\rvert-\lvert \Omega_{2}-A_{\Omega_{1}}\rvert-2 \lvert A 
		\rvert \Psi \lvert A^{-1} 
		\rvert \Omega_{2}.$$
		Let $M=\frac{1}{\alpha}(D-\beta L)$  and $N=\frac{1}{\alpha}((1-\alpha)D+(\alpha-\beta)L+\alpha U).$ Then
		\begin{equation*}
			\begin{split}
				\bar{H}&=\langle M_{\Omega_{1}}\rangle+\lvert\phi_{\Omega_{1}}\rvert+\Omega_{2}\\&
				=\langle \frac{1}{\alpha}(D_{\Omega_1}-\beta L_{\Omega_1})\rangle+\lvert\phi_{\Omega_{1}}\rvert+\Omega_{2}\\&
				= \frac{1}{\alpha}[(D_{\Omega_1}-\beta \lvert L_{\Omega_1}\rvert)+\alpha\lvert\phi_{\Omega_{1}}\rvert+\alpha\Omega_{2}]
			\end{split}
		\end{equation*}
		and
		\begin{equation*}
			\begin{split}
				\bar{F}=\frac{1}{\alpha}[\lvert(1-\alpha)D_{\Omega_1}+(\alpha-\beta)L_{\Omega_1}+\alpha U_{\Omega_1}+\alpha\phi_{\Omega_{1}}\rvert+\alpha\lvert \Omega_{2}-A_{\Omega_{1}}\rvert+2\alpha \lvert A 
				\rvert \Psi \lvert A^{-1} 
				\rvert \Omega_{2}].
			\end{split}
		\end{equation*}
		Now we compute
		\begin{equation*}
			\begin{split}
				\alpha(\bar{H}-\bar{F})&=[(D_{\Omega_1}-\beta \lvert L_{\Omega_1}\rvert)+\alpha\lvert\phi_{\Omega_{1}}\rvert+\alpha\Omega_{2}]-[\lvert(1-\alpha)D_{\Omega_1}\\&+(\alpha-\beta)L_{\Omega_1}+\alpha U_{\Omega_1}+\alpha\phi_{\Omega_{1}}\rvert+\alpha\lvert \Omega_{2}-A_{\Omega_{1}}\rvert+2\alpha \lvert A 
				\rvert \Psi \lvert A^{-1} 
				\rvert \Omega_{2}]\\&
				= (D_{\Omega_1}-\beta \lvert L_{\Omega_1}\rvert)+\alpha\lvert\phi_{\Omega_{1}}\rvert+\alpha\Omega_{2}-\lvert(1-\alpha)D+\alpha\phi_{\Omega_{1}}\rvert \\&-\lvert\alpha L_{\Omega_1}+\alpha U-\beta L\rvert-\lvert \Omega_{2}-A_{\Omega_{1}}\rvert-2\alpha \lvert A 
				\rvert \Psi \lvert A^{-1} 
				\rvert \Omega_{2}\\&
				= (D_{\Omega_1}-\beta \lvert L_{\Omega_1}\rvert)+\alpha\lvert\phi_{\Omega_{1}}\rvert-\lvert(1-\alpha)D+\alpha\phi_{\Omega_{1}}\rvert \\&-\lvert\alpha B_{\Omega_{1}}-\beta L\rvert+\alpha D_{\Omega_{1}}-\alpha\lvert B_{\Omega_{1}}\rvert-2\alpha \lvert A 
				\rvert \Psi \lvert A^{-1} 
				\rvert \Omega_{2}\\&
				\geq  (1+\alpha-\lvert1-\alpha\rvert)D_{\Omega_{1}} -\lvert\alpha B_{\Omega_{1}}-\beta L\rvert\\&-\alpha\lvert B_{\Omega_{1}}\rvert-\beta \lvert L_{\Omega_1}\rvert-2\alpha \lvert A 
				\rvert \Psi \lvert A^{-1} 
				\rvert \Omega_{2}.
			\end{split}
		\end{equation*}
		Since  
		\begin{equation*}
			(1 +\alpha -\lvert1-\alpha \rvert)= 2\min\{1, \alpha\}
		\end{equation*} 
		and 
		\begin{equation*}
			\begin{split}
				\lvert\alpha B_{\Omega_{1}}-\beta L_{\Omega_1}\rvert+\alpha\lvert B_{\Omega_{1}}\rvert+\beta \lvert L_{\Omega_1}\rvert
				&\leq \alpha\lvert B_{\Omega_{1}}\rvert+\beta \lvert L_{\Omega_1}\rvert+\alpha\lvert B_{\Omega_{1}}\rvert+\beta \lvert L_{\Omega_1}\rvert
				\\&\leq 2\max\{\alpha,\beta\}\lvert B_{\Omega_{1}}\rvert.
			\end{split}
		\end{equation*}
		Therefore, \begin{equation*}
			\begin{split}
				\alpha(\bar{H}-\bar{F})&\geq 2\min\{1, \alpha\}D_{\Omega_1}-2\max\{\alpha,\beta\}\lvert B_{\Omega_{1}}\rvert-2\alpha \lvert A 
				\rvert \Psi \lvert A^{-1} 
				\rvert \Omega_{2}\\&
				\geq 2\min\{1, \alpha\}D_{\Omega_1}-2\max\{\alpha,\beta\}\lvert B_{\Omega_{1}}\rvert-2\max\{\alpha,\beta\} \lvert A 
				\rvert \Psi \lvert A^{-1} 
				\rvert \Omega_{2}
				\\&
				\geq 2\min\{1, \alpha\}D_{\Omega_1}-2\max\{\alpha,\beta\}(\lvert B_{\Omega_{1}}\rvert+2 \lvert A 
				\rvert \Psi \lvert A^{-1} 
				\rvert \Omega_{2})
				\\&
				\geq 2D_{\Omega_1}(\min\{1, \alpha\}I-\max\{\alpha,\beta\}D^{-1}_{\Omega_1}(\lvert B_{\Omega_{1}}\rvert+ \lvert A 
				\rvert \Psi \lvert A^{-1} 
				\rvert \Omega_{2}).
			\end{split}
		\end{equation*}
		Since $\rho(D^{-1}_{\Omega_1}(\lvert B_{\Omega_{1}}\rvert+ \lvert A 
		\rvert \Psi \lvert A^{-1} 
		\rvert \Omega_{2})\textless 1,$ it is easy to verify that $\alpha(\bar{H}-\bar{F})$ an $M$-matrix. Hence, we obtain that $\bar{H}-\bar{F}$ is an $M$-matrix.
	\end{proof}
	\begin{thm}
		Let $A=(M+\phi)-(N+\phi)$ be an $H$-splitting with an $H_+$-matrix $A \in \mathbb{R}^{n\times n}$.  Suppose $\Psi$ be a nonnegative matrix, there exists a positive diagonal matrix $V$ such that $(\langle M+\phi\rangle -(N+\phi))V$  is a strictly diagonally dominant matrix and one of the following conditions holds:\\
		(1) for  $\Omega_{2} \geq D_{\Omega_{1}}$,
		$(\langle M_{\Omega_{1}}+\phi_{\Omega_{1}}\rangle -\lvert N_{\Omega_{1}}+\phi_{\Omega_{1}}\rvert -\lvert A 
		\rvert \Psi \lvert A^{-1} 
		\rvert \Omega_{2})Ve ~\textgreater ~0;$\\
		(2) for $0 ~\textless~ \Omega_{2}~\textless~ D_{\Omega_{1}}$, $(\Omega_{2}-D_{\Omega_{1}}+\langle M_{\Omega_{1}}+\phi_{\Omega_{1}}\rangle -\lvert N_{\Omega_{1}}+\phi_{\Omega_{1}}\rvert -\lvert A 
		\rvert \Psi \lvert A^{-1} 
		\rvert \Omega_{2})Ve~ \textgreater ~0.$ \\
		Then the iterative sequence $\{z^{(k)}\}_ {k=0}^{+\infty} \subset \mathbb{R}^n$ generated by NRMAOR method converges to a unique solution $z^*\in \mathbb{R}^n$ for  any  initial vector $x^{(0)}\in \mathbb{R}^n$.
	\end{thm}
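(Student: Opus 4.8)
The plan is to follow the pattern of the proofs of Theorems \ref{thm2} and \ref{thm3}: reduce convergence, via Theorem \ref{thm1}, to the spectral estimate $\rho(T)\textless 1$ for the iteration matrix
$$T=\lvert(\Omega_{2}+M_{\Omega_{1}}+\phi_{\Omega_{1}})^{-1}\rvert\big(\lvert N_{\Omega_{1}}+\phi_{\Omega_{1}}\rvert+\lvert\Omega_{2}-A_{\Omega_{1}}\rvert+2\lvert A\rvert\Psi\lvert A^{-1}\rvert\Omega_{2}\big),$$
and then bound $\rho(T)$ by an $\infty$-norm computed through the scaling $V$ and Lemma \ref{2.4}. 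First, since $A=(M+\phi)-(N+\phi)$ is an $H$-splitting, $\langle M+\phi\rangle-\lvert N+\phi\rvert$ and hence $\langle M+\phi\rangle$ are $M$-matrices by Lemma \ref{lem1}(1); together with $a_{ii}\textgreater 0$ this makes $\Omega_{2}+M_{\Omega_{1}}+\phi_{\Omega_{1}}$ an $H_{+}$-matrix, so Lemma \ref{lem1}(2) gives $0\leq T\leq\widehat{T}$, where $\widehat{T}$ is $T$ with $\lvert(\Omega_{2}+M_{\Omega_{1}}+\phi_{\Omega_{1}})^{-1}\rvert$ replaced by $\langle\Omega_{2}+M_{\Omega_{1}}+\phi_{\Omega_{1}}\rangle^{-1}$, and by Lemma \ref{lem1}(3) it suffices to show $\rho(\widehat{T})\textless 1$.

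Next I would pass to the diagonal similarity by $V$. Using that $\Omega_{1},\Omega_{2},\phi,D$ are diagonal one gets $\rho(\widehat{T})=\rho(V^{-1}\widehat{T}V)=\rho(\mathcal{W}^{-1}\mathcal{E})$ with $\mathcal{W}=\langle\Omega_{2}+M_{\Omega_{1}}+\phi_{\Omega_{1}}\rangle V$ and $\mathcal{E}=\big(\lvert N_{\Omega_{1}}+\phi_{\Omega_{1}}\rvert+\lvert\Omega_{2}-A_{\Omega_{1}}\rvert+2\lvert A\rvert\Psi\lvert A^{-1}\rvert\Omega_{2}\big)V\geq 0$. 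The matrix $\mathcal{W}$ is a $Z$-matrix whose diagonal is positive, so $\langle\mathcal{W}\rangle=\mathcal{W}$; once we know $\mathcal{W}$ is strictly diagonally dominant, that is $\mathcal{W}e\textgreater\mathcal{E}e$ componentwise, Lemma \ref{2.4} gives
$$\rho(\widehat{T})\leq\|\mathcal{W}^{-1}\mathcal{E}\|_{\infty}\leq\max_{1\leq i\leq n}\frac{(\lvert\mathcal{E}\rvert e)_{i}}{(\langle\mathcal{W}\rangle e)_{i}}=\max_{1\leq i\leq n}\frac{(\mathcal{E}e)_{i}}{(\mathcal{W}e)_{i}}\textless 1,$$
hence $\rho(T)\leq\rho(\widehat{T})\textless 1$, and Theorem \ref{thm1} yields convergence of $\{z^{(k)}\}$ to the unique $z^{*}$ for every $x^{(0)}$ (this is where the NRMAOR splitting enters, as the choice of $M,N$ in Method \ref{mthd1}).

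So everything comes down to $\mathcal{W}e\textgreater\mathcal{E}e$, that is $(\bar{H}-\bar{F})Ve\textgreater 0$, where $\bar{H}-\bar{F}=\langle\Omega_{2}+M_{\Omega_{1}}+\phi_{\Omega_{1}}\rangle-\lvert N_{\Omega_{1}}+\phi_{\Omega_{1}}\rvert-\lvert\Omega_{2}-A_{\Omega_{1}}\rvert-2\lvert A\rvert\Psi\lvert A^{-1}\rvert\Omega_{2}$ is precisely the matrix occurring in the proof of Theorem \ref{thm3}. I would evaluate the pieces exactly: writing $A_{\Omega_{1}}=D_{\Omega_{1}}-B_{\Omega_{1}}$ one has $\lvert\Omega_{2}-A_{\Omega_{1}}\rvert=\lvert\Omega_{2}-D_{\Omega_{1}}\rvert+\lvert B_{\Omega_{1}}\rvert$ with disjoint supports, where $\lvert\Omega_{2}-D_{\Omega_{1}}\rvert$ equals $\Omega_{2}-D_{\Omega_{1}}$ in case (1) and $D_{\Omega_{1}}-\Omega_{2}$ in case (2), and $\langle\Omega_{2}+M_{\Omega_{1}}+\phi_{\Omega_{1}}\rangle=\Omega_{2}+\langle M_{\Omega_{1}}+\phi_{\Omega_{1}}\rangle$. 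Substituting, the $\Omega_{2}$-terms cancel and $(\bar{H}-\bar{F})Ve\textgreater 0$ collapses to $\big(\langle M_{\Omega_{1}}+\phi_{\Omega_{1}}\rangle-\lvert N_{\Omega_{1}}+\phi_{\Omega_{1}}\rvert+D_{\Omega_{1}}-\lvert B_{\Omega_{1}}\rvert-2\lvert A\rvert\Psi\lvert A^{-1}\rvert\Omega_{2}\big)Ve\textgreater 0$ in case (1), and the same with $D_{\Omega_{1}}$ replaced by $2\Omega_{2}-D_{\Omega_{1}}$ in case (2). Finally, for the NRMAOR (AOR) splitting a direct computation under the standard parameter restrictions ($0\textless\alpha\leq 1$, $0\leq\beta\leq\alpha$) gives $\langle M_{\Omega_{1}}+\phi_{\Omega_{1}}\rangle-\lvert N_{\Omega_{1}}+\phi_{\Omega_{1}}\rvert=\langle A\rangle\Omega_{1}=D_{\Omega_{1}}-\lvert B_{\Omega_{1}}\rvert$, so each reduced inequality is exactly twice hypothesis (1) (respectively (2)) and therefore holds by assumption.

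The hard part will be this last calculation: one must evaluate $\lvert\Omega_{2}-A_{\Omega_{1}}\rvert$ exactly (via the sign of $\Omega_{2}-D_{\Omega_{1}}$) rather than only bounding it, reconcile the factor $2\lvert A\rvert\Psi\lvert A^{-1}\rvert\Omega_{2}$ carried in $T$ with the single copy present in conditions (1)--(2), and use the identity $\langle M_{\Omega_{1}}+\phi_{\Omega_{1}}\rangle-\lvert N_{\Omega_{1}}+\phi_{\Omega_{1}}\rvert=\langle A\rangle\Omega_{1}$ special to the AOR splitting; the remaining steps merely reuse the manipulations already carried out in the proofs of Theorems \ref{thm1}, \ref{thm2} and \ref{thm3}.
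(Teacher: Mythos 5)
Your proposal follows essentially the same route as the paper's proof: reduce to $\rho(T)<1$ via Theorem \ref{thm1}, majorize $T$ by its comparison-matrix version using Lemma \ref{lem1}, scale by $V$, apply Lemma \ref{2.4} to convert the spectral bound into the componentwise inequality $(\Omega_{2}+\langle M_{\Omega_{1}}\rangle+\lvert\phi_{\Omega_{1}}\rvert)Ve>(\lvert N_{\Omega_{1}}+\phi_{\Omega_{1}}\rvert+\lvert\Omega_{2}-A_{\Omega_{1}}\rvert+2\lvert A\rvert\Psi\lvert A^{-1}\rvert\Omega_{2})Ve$, and evaluate $\lvert\Omega_{2}-A_{\Omega_{1}}\rvert$ exactly in the two cases $\Omega_{2}\geq D_{\Omega_{1}}$ and $\Omega_{2}<D_{\Omega_{1}}$. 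This is precisely the paper's argument.

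The one place you deviate is the final step, and it is the weakest link: you invoke the exact identity $\langle M_{\Omega_{1}}+\phi_{\Omega_{1}}\rangle-\lvert N_{\Omega_{1}}+\phi_{\Omega_{1}}\rvert=\langle A_{\Omega_{1}}\rangle=D_{\Omega_{1}}-\lvert B_{\Omega_{1}}\rvert$, claiming it is ``special to the AOR splitting'' under $0<\alpha\leq 1$, $0\leq\beta\leq\alpha$. That identity is not needed, is not guaranteed by the hypotheses of the theorem (which impose no such restriction on $\alpha,\beta$), and in general fails once the relaxation matrix $\phi$ is nonzero, since $\lvert m_{ii}+\phi_{ii}\rvert-\lvert n_{ii}+\phi_{ii}\rvert$ need not equal $\lvert m_{ii}\rvert-\lvert n_{ii}\rvert$. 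All you need is the one-sided inequality $\langle A_{\Omega_{1}}\rangle\geq\langle M_{\Omega_{1}}+\phi_{\Omega_{1}}\rangle-\lvert N_{\Omega_{1}}+\phi_{\Omega_{1}}\rvert=\bar{A}_{\Omega_{1}}$, valid for any splitting by the triangle inequality, which turns your reduced expression in Case (1) into $(\bar{A}_{\Omega_{1}}+\langle A_{\Omega_{1}}\rangle-2\lvert A\rvert\Psi\lvert A^{-1}\rvert\Omega_{2})Ve\geq 2(\bar{A}_{\Omega_{1}}-\lvert A\rvert\Psi\lvert A^{-1}\rvert\Omega_{2})Ve>0$ directly from hypothesis (1), and similarly in Case (2); this is exactly how the paper closes the argument. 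With that substitution your proof is complete and parameter-free.
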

	\begin{proof}
		Let $\bar{A}=\langle M+\phi\rangle -\lvert N+\phi\rvert $, $\Omega_{1}\bar{A}=\bar{A}_{\Omega_{1}}$ and $A=(M+\phi)-(N+\phi)$ be an $H$-splitting. Therefore $\bar{A}$ is a nonsingular  $M$-matrix.\\
		$$0 \textless \bar{A} Ve \leq \langle M+\phi\rangle Ve \leq (\langle M+\phi\rangle +\Omega_{2}) Ve  $$
		and $$0 \textless \bar{A}Ve \textless \langle A\rangle Ve. $$
		Further
		\begin{equation}\label{eq8}
			\begin{split}
				&(\Omega_{2}+\langle M_{\Omega_{1}}
				\rangle+\lvert\phi_{\Omega_{1}}
				\rvert-\lvert N_{\Omega_{1}}+\phi_{\Omega_{1}}\rvert-\lvert \Omega_{2}-A_{\Omega_{1}}\rvert-2 \lvert A 
				\rvert \Psi \lvert A^{-1} 
				\rvert \Omega_{2})Ve\\&
				\geq 
				(\langle M_{\Omega_{1}}+\phi_{\Omega_{1}}\rangle-\lvert N_{\Omega_{1}}+\phi_{\Omega_{1}}\rvert+\Omega_{2}-\lvert \Omega_{2}-A_{\Omega_{1}}\rvert-2 \lvert A 
				\rvert \Psi \lvert A^{-1} 
				\rvert \Omega_{2})Ve.
			\end{split}
		\end{equation}
		Here, we consider two cases\\
		Case (1): when $\Omega_{2}\geq D_{\Omega_{1}}$,
		\begin{equation*}
			\begin{split}
				(\bar{A}_{\Omega_{1}}+\Omega_{2}-\lvert \Omega_{2}-A_{\Omega_{1}}\rvert-2 \lvert A 
				\rvert \Psi \lvert A^{-1} 
				\rvert \Omega_{2})Ve
				&= (\bar{A}_{\Omega_{1}}+\Omega_{2}-( \Omega_{2}-\langle A_{\Omega_{1}}\rangle)\\&-2 \lvert A 
				\rvert \Psi \lvert A^{-1} 
				\rvert \Omega_{2})Ve\\&
				= (\bar{A}_{\Omega_{1}}+\langle A_{\Omega_{1}}\rangle-2 \lvert A 
				\rvert \Psi \lvert A^{-1} 
				\rvert \Omega_{2})Ve\\&
				\geq 2(\bar{A}_{\Omega_{1}}-\lvert A 
				\rvert \Psi \lvert A^{-1} 
				\rvert \Omega_{2})Ve\\&
				\geq 0.
			\end{split}
		\end{equation*}
		Case (2): when $0 \textless \Omega_{2}\leq D_{\Omega_{1}}$,
		\begin{equation*}
			\begin{split}
				(\bar{A}_{\Omega_{1}}+\Omega_{2}-\lvert \Omega_{2}-A_{\Omega_{1}}\rvert\-2 \lvert A 
				\rvert \Psi \lvert A^{-1} 
				\rvert \Omega_{2})Ve
				&= (\bar{A}_{\Omega_{1}}+\Omega_{2}-(\lvert A_{\Omega_{1}}\rvert-\Omega_{2})\\&-2 \lvert A 
				\rvert \Psi \lvert A^{-1} 
				\rvert \Omega_{2})Ve\\&
				= (\bar{A}_{\Omega_{1}}+\Omega_{2}-(2D_{\Omega_{1}}-\langle A_{\Omega_{1}}\rangle-\Omega_{2})\\&-2 \lvert A 
				\rvert \Psi \lvert A^{-1} 
				\rvert \Omega_{2})Ve\\&
				\geq 2(\bar{A}_{\Omega_{1}}+\Omega_{2}-D_{\Omega_{1}}-\lvert A 
				\rvert \Psi \lvert A^{-1} 
				\rvert \Omega_{2})Ve\\&
				\geq 0.
			\end{split}
		\end{equation*}
		Now from Case (1), Case (2) and Equation  $(\ref{eq8})$, we obtain that 
		$$(\Omega_{2}+\langle M_{\Omega_{1}}
		\rangle+\lvert\phi_{\Omega_{1}}
		\rvert-\lvert N_{\Omega_{1}}+\phi_{\Omega_{1}}\rvert-\lvert \Omega_{2}-A_{\Omega_{1}}\rvert-2 \lvert A 
		\rvert \Psi \lvert A^{-1} 
		\rvert \Omega_{2})Ve ~\textgreater ~0.$$
		This implies that 
		$$(\Omega_{2}+\langle M_{\Omega_{1}}
		\rangle+\lvert\phi_{\Omega_{1}}
		\rvert)Ve~\textgreater~ (\lvert N_{\Omega_{1}}+\phi_{\Omega_{1}}\rvert+\lvert A_{\Omega_{1}}+\Omega_{2}\rvert+2 \lvert A 
		\rvert \Psi \lvert A^{-1} 
		\rvert \Omega_{2})Ve.$$
		Now from Lemma $\ref{2.4}$, we have 
		\begin{equation*}
			\begin{split}
				\rho(\bar{L})&=\rho(V^{-1}\bar{L}V)\\&
				\leq \|V^{-1}\bar{L}V\|_{\infty}\\&
				=\| V^{-1}(\Omega_{2}+\langle M_{\Omega_{1}}
				\rangle+\lvert\phi_{\Omega_{1}}
				\rvert)^{-1}(\lvert N_{\Omega_{1}}+\phi_{\Omega_{1}}\rvert+\lvert \Omega_{2}-A_{\Omega_{1}}\rvert
				\\&+2 \lvert A 
				\rvert \Psi \lvert A^{-1} 
				\rvert \Omega_{2})V\|_{\infty}\\&
				=\|((\Omega_{2}+\langle M_{\Omega_{1}}
				\rangle+\lvert\phi_{\Omega_{1}}
				\rvert)V)^{-1}(\lvert N_{\Omega_{1}}+\phi_{\Omega_{1}}\rvert+\lvert \Omega_{2}-A_{\Omega_{1}}\rvert\\&+2 \lvert A 
				\rvert \Psi \lvert A^{-1} 
				\rvert \Omega_{2})V\|_{\infty}\\&
				\leq \max_{1\leq i \leq n}\frac{((\lvert N_{\Omega_{1}}+\phi_{\Omega_{1}}\rvert+\lvert \Omega_{2}-A_{\Omega_{1}}\rvert+2 \lvert A 
					\rvert \Psi \lvert A^{-1} 
					\rvert \Omega_{2})Ve)_{i}}{((\Omega_{2}+\langle M_{\Omega_{1}}
					\rangle+\lvert\phi_{\Omega_{1}}
					\rvert)Ve)_{i}}\\&
				\textless 1.
			\end{split}
		\end{equation*}
		Hence, 	then the iterative sequence $\{z^{(k)}\}_ {k=0}^{+\infty} \subset \mathbb{R}^n$ generated by NRMAOR method converges to a unique solution $z^*\in \mathbb{R}^n$ for  any  initial vector $x^{(0)}\in \mathbb{R}^n$.
	\end{proof} 
	\section{Conclusion} 
	We discuss a class of new relaxation modulus-based matrix splitting methods based on a new matrix splitting approach for solving the implicit complementarity problem ICP$(q, A, \zeta)$ with parameter matrices  $\Omega_1$ and $\Omega_2$.  These methods are able to process the large and sparse structure of the matrix $A$.  For the proposed methods, we establish some convergence conditions when the system matrix is a $P$-matrix as well as some sufficient convergence conditions when the system matrix is an $H_{+}$-matrix.
	%\nocite{oreg,schn,pond,smith,marg,hunn,advi,koha,mouse}
	%%%%%%%%%%%%%%%%%%%%%%%%%%%%%%%%%%%%%%%%%%%%%%
	%%                                          %%
	%% Backmatter begins here                   %%
	%%                                          %%
	%%%%%%%%%%%%%%%%%%%%%%%%%%%%%%%%%%%%%%%%%%%%%%
	\section*{Declaration}
	\subsection*{Ethics approval and consent to participate} This article does not contain any studies with human participants or animals performed by any authors.
	%\section*{Abbreviations}%% if any
	%Text for this section\ldots
	\subsection*{Availability of data and materials} Not applicable.
	\subsection*{Conflict of interest} The authors declare that there are no conflicts of interest.
	\subsection*{Acknowledgments}
	The author, Bharat Kumar is thankful to the University Grants Commission (UGC), Government of India, under the SRF fellowship, Ref. No.: 1068/(CSIR-UGC NET DEC. 2017).
	%\section*{Authors' information}%% if any
	%Text for this section\ldots
	%%%%%%%%%%%%%%%%%%%%%%%%%%%%%%%%%%%%%%%%%%%%%%%%%%%%%%%%%%%%%
	%%                  The Bibliography                       %%
	%%                                                         %%
	%%  Bmc_mathpys.bst  will be used to                       %%
	%%  create a .BBL file for submission.                     %%
	%%  After submission of the .TEX file,                     %%
	%%  you will be prompted to submit your .BBL file.         %%
	%%                                                         %%
	%%                                                         %%
	%%  Note that the displayed Bibliography will not          %%
	%%  necessarily be rendered by Latex exactly as specified  %%
	%%  in the online Instructions for Authors.                %%
	%%                                                         %%
	%%%%%%%%%%%%%%%%%%%%%%%%%%%%%%%%%%%%%%%%%%%%%%%%%%%%%%%%%%%%%
	% if your bibliography is in bibtex format, use those commands:
	%\bibliographystyle{bmc-mathphys} % Style BST file (bmc-mathphys, vancouver, spbasic).
	%\bibliography{bmc_article}      % Bibliography file (usually '*.bib' )
	% for author-year bibliography (bmc-mathphys or spbasic)
	% a) write to bib file (bmc-mathphys only)
	% @settings{label, options="nameyear"}
	% b) uncomment next line
	%\nocite{label}
	% or include bibliography directly:
	% \begin{thebibliography}
		% \bibitem{b1}
		% \end{thebibliography}
		\bibliographystyle{plain} % Style BST file (bmc-mathphys, vancouver, spbasic).
	%	\bibliography{bibfile}      % Bibliography file (usually '*.bib' )
	% for author-year bibliography (bmc-mathphys or spbasic)
	% a) write to bib file (bmc-mathphys only)
	% @settings{label, options="nameyear"}
	% b) uncomment next line
	%\nocite{label}
	% or include bibliography directly:
	% \begin{thebibliography}
		% \bibitem{b1}
		% \end{thebibliography}
	%\bibliographystyle{sn-basic}
	\bibliography{nam10}
\end{document}